\documentclass[12pt]{amsart}
\usepackage{amsmath,amsfonts,amsbsy,amsgen,amscd,mathrsfs,amssymb,amsthm}
\usepackage[usenames,dvipsnames]{xcolor}
\usepackage[colorlinks=true,citecolor=red,linkcolor=blue]{hyperref}
\usepackage{geometry}
\usepackage{setspace}
\allowdisplaybreaks[4]
\numberwithin{equation}{section}
\newtheorem{theorem}{Theorem}[section]
\newtheorem{definition}[theorem]{Definition}
\newtheorem{lemma}[theorem]{Lemma}

\newtheorem{proposition}[theorem]{Proposition}

\newtheorem{problem}[theorem]{Problem}

\title[$L_p$-dual mixed Euclidean-Gauss Minkowski problem]
{The solvability of the $L_p$-dual mixed Euclidean-Gauss Minkowski problem}

\author{Na Fu}
\address{Na Fu \newline \indent School of Science, Lanzhou University of Technology, Lanzhou, 730050, China}
\curraddr{}
\email{funa930323@163.com}
\thanks{}

\author{Jian-Ping Sun}
\address{Jian-Ping Sun \newline \indent School of Science, Lanzhou University of Technology, Lanzhou, 730050, China}
\curraddr{}
\email{jpsun@lut.edu.cn}
\thanks{}

\author{Bin Chen}
\address{Bin Chen \newline \indent School of Science, Lanzhou University of Technology, Lanzhou, 730050, China}
\curraddr{}
\email{chenb121223@163.com}

\author{Ya-Hong Zhao}
\address{Ya-Hong Zhao \newline \indent School of Science, Lanzhou University of Technology, Lanzhou, 730050, China}
\curraddr{}
\email{348705932@qq.com}
\thanks{Research is supported by the National Natural Science Foundation of China (Grant No. 11661049 and 12601195)}
\thanks{Corresponding author: Jian-Ping Sun}

\subjclass[2020]{52A20; 52A40; 35K96}

\keywords{Minkowski problem; Monge-Amp\`{e}re equation; mixed Euclidean-Gaussian density}

\begin{document}
\begin{abstract}
  This paper investigates the $L_p$-dual mixed Euclidean-Gauss Minkowski problem associated with the $L_p$-dual mixed Euclidean-Gaussian curvature measure. The solvability of this problem is equivalent to that of a class of Monge-Amp\`{e}re type equations on the unit sphere. Under smooth assumptions, We establish the existence of solutions for $p<1$, $q<0$, and $q<p$ and uniqueness of solutions for $p\geq q$ to the aforementioned Monge-Amp\`{e}re type equations, respectively. Furthermore, we obtain a complete solution  concerning the existence part of this problem when $p<1$ and $q<0$.
\end{abstract}

\maketitle

\vskip 20pt
\section{Introduction }
Herein, we conduct our analysis within the $n$-dimensional Euclidean space $\mathbb{R}^n$. A convex body in $\mathbb{R}^n$ is conventionally defined as a compact convex subset possessing a  non-empty interior. Let $\mathbb{S}^{n-1}$ stand for the unit sphere in $\mathbb{R}^n$, and let $\mathcal{K}_o^n$ denote the family of all convex bodies in $\mathbb{R}^n$ that contain the origin in their respective interiors. For arbitrary vectors $x,y\in\mathbb{R}^n$,  $\langle x,y\rangle$ signifies the canonical Euclidean inner product, and $|x|=\sqrt{\langle x,x\rangle}$ denotes the associated Euclidean norm. We adopt $\mathcal{H}^k$ torepresent the $k$-dimensional Hausdorff measure and $\partial K$ to denote the boundary of a convex body $K$.

As a central topic in geometric analysis, the Minkowski problem has attracted sustained attention since its introduction by Minkowski \cite{M}. Over the past three decades, this subject has witnessed substantial progress, including the $L_p$, Orlicz, and their dual Minkowski-type problems, as well as Minkowski-type problems concerning solutions to boundary value problems. A great many influential works have emerged; see, for instance, \cite{Lu,HaL,GaH1,GaH2,HLYZ,LYZ,CL,LL,CF,ZZ1,J,Co,ZX,CD} and the
references therein. In recent years, the Gaussian Minkowski problem, an important extension of the classical Minkowski problem, was originally proposed and systematically investigated by Huang et al. \cite{HXZ}. Building upon this foundational work, Liu, Feng and other scholars further developed a profound general framework, yielding its $L_p$ variant and the corresponding dual counterpart; see \cite{BL,CS,CW,FHX,FLX,FS,LY,L,W} and the
references therein.

Motivated by the aforementioned literature, we investigate the $L_p$-dual Minkowski problem for mixed Euclidean-Gauss weighted dual quermassintegrals. To this end, we first define the definition of such weighted dual quermassintegrals.
For $q\in\mathbb{R}$ and $K\in\mathcal{K}_o^n$, define mixed Euclidean-Gauss weighted dual quermassintegrals, written as $\tilde{V}_{mix,q}(K)$, by
\begin{align}\label{1.1}
\tilde{V}_{mix,q}(K)=\int_{\mathbb{R}^n\backslash K}\exp\Big(-\tfrac12\sum_{i=1}^s\langle z,e_i\rangle^2\Big)|z|^{q-n}dz,
\end{align}
where $\langle\cdot,\cdot\rangle$ denotes the standard inner product.
In particular, setting $s=n$ and $s=0$ in \eqref{1.1} recovers the Gauss dual quermassintegral (see \cite{FLX}) and the dual quermassintegral (see \cite{S}), respectively.

From the definition of $\tilde{V}_{mix,q}$, a new geometric measure arises from the following variational formula.
For $K\in\mathcal{K}_o^n$ and $q\in\mathbb{R}$, let $f: \mathbb{S}^{n-1}\rightarrow\mathbb{R}$ be continuous. Then
$$\lim_{t\rightarrow0}\frac{\tilde{V}_{mix,q}([h_t])
-\tilde{V}_{mix,q}(K)}{t}=-\frac{1}{p}
\int_{\mathbb{S}^{n-1}}f(u)^pd\tilde{C}_{p,\tilde{V}_{mix,q}}
(K,u),$$
where, for $p\neq0$, $h_t(u)=(h_K(u)^p+tf(u)^p)^{\frac{1}{p}}$ denotes the $L_p$-combination (see \cite{Fi}), and the associated Wulff shape $[h_t]$ is defined as
$$[h_t]=\bigcap_{u\in\mathbb{S}^{n-1}}\{x\in\mathbb{R}^n: \langle x,u\rangle\leq h_t(u).$$
We call this new geometric measure $\tilde{C}_{p,\tilde{V}_{\mathrm{mix},q}}(K,\cdot)$ the $L_p$-dual mixed Euclidean-Gaussian curvature measure. As will be demonstrated later, it admits the integral representation
\begin{align}\label{1.2}
\tilde{C}_{p,\tilde{V}_{\mathrm{mix},q}}(K,\omega)
=\int_{\nu_K^{-1}(\omega)}\langle x,\nu_K(x)\rangle^{1-p}
\exp\Big(-\tfrac12\sum_{k=0}^s\langle x,e_k\rangle^2\Big)
|x|^{q-n}d\mathcal{H}^{n-1}(x),
\end{align}
for every Borel measurable set $\omega\subset\mathbb{S}^{n-1}$.

Accordingly, the $L_p$-dual mixed Euclidean-Gauss Minkowski problem is posed as follows:
\begin{problem}\label{p1.1}
Given $p,q\in\mathbb{R}$,
find necessary and sufficient conditions on a nonzero finite Borel measure $\mu$ on $\mathbb{S}^{n-1}$ guaranteeing the existence of a convex body $K$ satisfying
\begin{align*}
\mu=\tilde{C}_{p,\tilde{V}_{\mathrm{mix},q}}(K,\cdot)?
\end{align*}
If $K$ exists, to what extent is it unique$?$
\end{problem}

For $K\in\mathcal{K}_o^n$, if $\partial K$ is smooth with positive Gauss curvature, then $\tilde{C}_{p,\tilde{V}_{\mathrm{mix},q}}(K,\cdot)$ is absolutely continuous with respect to the spherical Lebesgue measure, i.e.,
\begin{align}\label{1.3}
\nonumber\frac{d\tilde{C}_{p,\tilde{V}_{\mathrm{mix},q}}(K,u)}{du}
&=h_K^{1-p}(u)
\exp\Big(-\tfrac12\sum_{k=1}^s\langle\nabla h_K(u)+h_K(u)u,e_k\rangle^2\Big)\\
&\ \ \ \ \cdot(|\nabla h_K(u)|^2+h_K^2(u))^{\frac{q-n}{2}}
\det(\nabla^2h_K(u)+h_K(u)I),
\end{align}
for all $u\in\mathbb{S}^{n-1}$. When $\mu$ possesses a non-negative integrable density $f$ on $\mathbb{S}^{n-1}$, Problem \ref{p1.1} reduces to solving the following Monge-Amp\`{e}re type equation in view of \eqref{1.3}:
\begin{align}\label{1.4}
\nonumber &h_K^{1-p}(u)
\exp\Big(-\tfrac12\sum_{k=1}^s\langle\nabla h_K(u)+h_K(u)u,e_k\rangle^2\Big)\\
&\ \ \ \ \cdot(|\nabla h_K(u)|^2+h_K^2(u))^{\frac{q-n}{2}}
\det(\nabla^2h_K(u)+h_K(u)I)=f.
\end{align}
Here, $\nabla h_K$ and $\nabla^2 h_K$ denote, respectively, the gradient vector and the Hessian matrix of the support function $h_K$ with respect to an orthonormal frame on $\mathbb{S}^{n-1}$, and $I$ denotes the identity matrix.

This paper presents the following contributions concerning the existence and uniqueness of solutions for Problem \ref{p1.1} and equation \eqref{1.4}.

First, we employ the continuity method to establish the existence of smooth solutions to equation \eqref{1.4}.
\begin{theorem}\label{t1.1}
Let $f$ be a positive smooth function on $\mathbb{S}^{n-1}$. If $p<1$, $q<0$ and $q<p$, then equation \eqref{1.4} admits a smooth solution.
\end{theorem}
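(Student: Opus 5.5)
We will use the continuity method, deforming from a case where the solution is explicit. Write the equation as
\[
\det(\nabla^2 h+hI)=G(u,h,\nabla h)\,f,
\qquad
G=h^{p-1}\exp\Big(\tfrac12\sum_{k=1}^s\langle\nabla h+hu,e_k\rangle^2\Big)(|\nabla h|^2+h^2)^{\frac{n-q}{2}} .
\]
For $t\in[0,1]$ set $f_t=(1-t)f_0+tf$, with $f_0$ the constant value of the left side of \eqref{1.4} at $h\equiv1$. Note that $\langle u,e_k\rangle$ is not constant, so $h\equiv1$ solves the $t=0$ equation only if $f_0$ is taken to be the function
\[
f_0(u)=\exp\Big(-\tfrac12\sum_{k=1}^s\langle u,e_k\rangle^2\Big)
\]
rather than a constant. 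This is a positive smooth function, so that is the choice we make. Let
\[
\mathcal I=\{t\in[0,1]:\ \text{the equation with } f_t \text{ has a positive smooth uniformly convex solution}\}.
\]
We show $\mathcal I$ is open and closed; since $0\in\mathcal I$, this gives $1\in\mathcal I$.

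\textbf{Closedness: a priori estimates.} Everything reduces to uniform $C^0$ bounds $0<c\le h\le C$. The higher-order estimates then follow in the standard way: $|\nabla h|\le \max h$ gives $C^1$; $C^2$ comes from the Caffarelli/Chou--Wang type estimates for Monge--Amp\`ere equations on the sphere with right side depending smoothly on $(u,h,\nabla h)$ and bounded between positive constants; and $C^{2,\alpha}$ and higher come from Evans--Krylov and Schauder theory.

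For the upper bound, evaluate the equation at a maximum point $u_0$ of $h$. There $\nabla h=0$, $\nabla^2h\le0$ and $h(u_0)u_0=x$ is a boundary point, so
\[
f\,h^{p-1}\,e^{\frac12\sum\langle x,e_k\rangle^2}\,h^{n-q}\ \ge\ \det(\nabla^2 h+hI)\quad\text{with}\quad \det(\nabla^2 h+hI)\le h^{n-1}.
\]
This yields $f\,h^{p-q+1-n}e^{(\cdots)}\ge h^{-(n-1)}$ only when the exponential is harmless, and that inequality points the wrong way.

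The maximum principle is therefore unhelpful: the exponential factor grows with $h$. We instead use the minimum point $u_1$ of $h$, where $\nabla h=0$, $\nabla^2h\ge0$ and $\det\ge h^{n-1}$. This gives
\[
h_{\min}^{\,n-1}\le f\,h_{\min}^{\,p-1+n-q}\,e^{\frac12\sum\langle h_{\min}u_1,e_k\rangle^2}.
\]
Hence
\[
h_{\min}^{\,q-p}\le f\,e^{\frac12 h_{\min}^2}.
\]
Since $q<p$, the left side blows up as $h_{\min}\to0$, which yields a lower bound $h_{\min}\ge c$.

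For the upper bound we plan an integral argument. Integrate the equation over $\mathbb S^{n-1}$ against a suitable weight, or use the fact that if $h_{\max}=R$ is large then $K$ contains a segment of length $R$ and $h\ge c$. The idea is that the Gaussian factor $\exp(-\frac12\sum\langle x,e_k\rangle^2)$ together with $|x|^{q-n}$ (with $q<0$) makes the total measure $\int f_t$ too small when $K$ is large, and this should contradict the fixed lower bound $\int f_t\ge\min f>0$. Concretely, we would estimate
\[
\int f_t=\int_{\partial K}\langle x,\nu\rangle^{1-p}e^{-\frac12\sum\langle x,e_k\rangle^2}|x|^{q-n}\,d\mathcal H^{n-1}
\]
using $\langle x,\nu\rangle\ge c$ with $1-p>0$. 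The hypotheses $q<0$ and $p<1$ should control the directions orthogonal to $e_1,\dots,e_s$, where there is no Gaussian decay. \emph{This upper bound is the main obstacle}, and we expect it to use all three hypotheses.

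\textbf{Openness.} We linearize at a solution $h_t$. The linearized operator is
\[
L\phi=\Delta_{h}\phi+\text{lower-order terms}.
\]
Write $\phi=h\psi$ and use the minimum principle, or argue directly that $L$ has trivial kernel. The zeroth-order coefficient is governed by $(p-1)+(q-n)\cdot(\cdots)$ and by the derivative of the exponential. We would test $L\phi=0$ at the extrema of $\phi/h$ and show the resulting sign forces $\phi\equiv0$; this uses $q<p$, possibly together with uniqueness-type arguments of the kind used for $p\ge q$. Injectivity plus Fredholm index zero gives invertibility, and the implicit function theorem then gives openness.
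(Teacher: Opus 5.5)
Your change of starting point is correct, and in fact necessary. Because $\langle u,e_k\rangle$ varies with $u$, a constant $h$ does not solve \eqref{1.4} with a constant right-hand side when $s\ge1$; the paper's constant starting point does not account for this. By contrast, $h\equiv1$ does solve it with $f_0(u)=\exp\big(-\tfrac12\sum_{k=1}^s\langle u,e_k\rangle^2\big)$. Your lower bound is the paper's. Your openness plan also works: with $\dot h=h\psi$, the zeroth-order coefficient of the (logarithmic) linearized operator is $q-p-\sum_{k=1}^s\langle Dh,e_k\rangle^2<0$, so testing at the extrema of $\psi$ shows the kernel is trivial, as in Lemma~\ref{El2}. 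No uniqueness-type argument is needed.

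The genuine gap is the upper bound on $h$. You discard the maximum principle for a wrong reason and leave only an unexecuted integral heuristic. At the maximum point you wrote the equation as an inequality ``$\ge$'' and combined it with $\det(\nabla^2h+hI)\le h^{n-1}$. These are two upper bounds on the same quantity, so indeed nothing follows. If you keep the equality instead, then at a maximum point $u_0$ (where $\nabla h=0$ and $\nabla^2h\le0$) you get
\begin{align*}
f_t(u_0)\,h_{\max}^{\,p-1+n-q}\exp\Big(\tfrac12h_{\max}^2\sum_{k=1}^s\langle u_0,e_k\rangle^2\Big)
=\det(\nabla^2h+hI)(u_0)\le h_{\max}^{\,n-1}.
\end{align*}
Since the exponential is $\ge1$, you may drop it and obtain $h_{\max}^{\,p-q}\le 1/\min_{\mathbb{S}^{n-1}}f_t$. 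Because $p>q$ and $\min f_t\ge\min\{\min f_0,\min f\}>0$, this is a uniform upper bound; it is exactly the second half of Lemma~\ref{El1}. So the Gaussian factor enters with the \emph{favorable} sign, and only $q<p$ is used. Neither $p<1$ nor $q<0$ plays a role here, contrary to your expectation that this step needs all three hypotheses. As written, though, your proposal contains no upper bound. The $C^0$ estimate is therefore incomplete, and so are the higher-order estimates built on it and the closedness of $\mathcal I$.
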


Secondly, by employing Theorem \ref{t1.1}, we provide a complete resolution for the existence part of Problem \ref{p1.1} by an approximation argument.
\begin{theorem}\label{t1.2}
Let $\mu$ be a nonzero finite Borel measure on $\mathbb{S}^{n-1}$. Suppose $p<1$ and $q<0$. Then there exists a convex body $K\in\mathcal{K}_o^n$ satisfying $\mu=\tilde{C}_{p,\tilde{V}_{\mathrm{mix},q}}(K,\cdot)$ if and only if $\mu$ is not concentrated on any closed hemisphere.
\end{theorem}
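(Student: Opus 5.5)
Necessity is standard; sufficiency goes by approximating $\mu$ with smooth positive densities, applying Theorem \ref{t1.1}, and passing to the limit with uniform a priori bounds.

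\emph{Necessity.} If $\mu=\tilde{C}_{p,\tilde{V}_{\mathrm{mix},q}}(K,\cdot)$ with $K\in\mathcal{K}_o^n$, then by \eqref{1.2} the density $\langle x,\nu_K(x)\rangle^{1-p}\exp(\cdots)|x|^{q-n}$ is bounded above and below by positive constants on $\partial K$, since $0<r\le |x|\le R$ and $\langle x,\nu_K\rangle\ge r$. Hence $\mu$ is comparable to the surface area measure $S_K$. It is classical that $S_K$ is not concentrated on any closed hemisphere, because $\int_{\mathbb{S}^{n-1}}\langle u,v\rangle_+\,dS_K(u)>0$ for every $v\in\mathbb{S}^{n-1}$. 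So $\mu$ inherits the same property.

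\emph{Sufficiency: approximation.} Let $\mu$ be not concentrated on any closed hemisphere.
\begin{itemize}
\item Convolve $\mu$ with a smooth approximate identity on $\mathbb{S}^{n-1}$ and add $\frac1k$ times spherical Lebesgue measure. This gives positive smooth densities $f_k$ with $\mu_k=f_k\,du\to\mu$ weakly and $|\mu_k|$ uniformly bounded.
\item Weak convergence plus the hemisphere condition give a uniform $c_0>0$ with $\int\langle u,v\rangle_+\,d\mu_k\ge c_0$ for all $v$ and large $k$. This follows from a compactness argument over $v\in\mathbb{S}^{n-1}$ together with continuity of $v\mapsto\int\langle u,v\rangle_+d\mu$.
\item Theorem \ref{t1.1} needs $q<p$ as well. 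If $p\le q<0$, I would first reduce to a smooth-solution existence statement valid for $p<1,q<0$, presumably proved alongside Theorem \ref{t1.1} or by the same degree/continuity argument. For the plan I assume smooth solutions $K_k$ of \eqref{1.4} with $f=f_k$ exist for all $p<1$, $q<0$.
\end{itemize}

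\emph{Uniform bounds.} I need $r\le h_{K_k}\le R$ independent of $k$.
\begin{itemize}
\item Upper bound. Let $R_k=\max h_{K_k}=h_{K_k}(v_k)$, so $K_k$ contains the segment $[0,R_kv_k]$ and $h_{K_k}(u)\ge R_k\langle u,v_k\rangle_+$. Write $d\mu_k=\langle x,\nu\rangle^{1-p}\exp(\cdots)|x|^{q-n}d\mathcal{H}^{n-1}$. Because $q<0$, the weight $|x|^{q-n}$ decays strongly at large $|x|$ and the Gaussian factor is at most $1$. Together with $1-p>0$, integrating $\langle u,v_k\rangle_+$ against $\mu_k$ should give $c_0\le\int\langle u,v_k\rangle_+d\mu_k\lesssim \int_{\partial K_k}\langle x,\nu\rangle^{1-p}|x|^{q-n}\langle \nu,v_k\rangle_+$, which tends to zero as $R_k\to\infty$. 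More concretely, I would pass to the radial parametrization, where $|x|^{q-n}d\mathcal{H}^{n-1}$ times $\langle x,\nu\rangle$ equals $\rho^{q}\,d\xi$, and bound $\int \rho^q(\langle x,\nu\rangle)^{-p}\langle\nu,v\rangle_+\,d\xi$. I expect this to be the main obstacle: controlling the factor $\langle x,\nu\rangle^{-p}$ when $p$ may be negative, and showing the whole expression is small when $R_k$ is large.
\item Lower bound. Suppose $\min h_{K_k}\to0$. The total mass $|\mu_k|$ equals $\int\rho^q h^{-p}\exp(\cdots)\,d\xi$-type quantities. With $q<0$ and $p<1$, degeneration of $K_k$ toward the boundary through the origin forces blow-up, or forces the measure to concentrate on a closed hemisphere; both contradict the uniform bounds on $\mu_k$ and $c_0$. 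I would make this precise via Blaschke selection: a limit $K_0$ with $o\in\partial K_0$, then show $\int\langle u,v\rangle_+d\mu_k\to0$ for the outer normal $v$ at $o$, or that the mass diverges.
\end{itemize}

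\emph{Limit.} With the bounds in hand, Blaschke selection gives $K_k\to K\in\mathcal{K}_o^n$. Weak continuity of $K\mapsto\tilde{C}_{p,\tilde{V}_{\mathrm{mix},q}}(K,\cdot)$ on $\mathcal{K}_o^n$ follows from \eqref{1.2}, since the integrand is continuous and bounded when $h_K$ is bounded below. Passing to the limit yields $\mu=\tilde{C}_{p,\tilde{V}_{\mathrm{mix},q}}(K,\cdot)$.
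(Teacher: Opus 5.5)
Your overall architecture (smooth approximation, Theorem~\ref{t1.1}, uniform bounds, Blaschke selection, weak continuity) and your necessity argument match the paper's. The sufficiency part, however, has a genuine gap. The two uniform bounds on $h_{K_k}$, which carry the whole proof, are not established, and the mechanisms you indicate cannot work on the full range $p<1$, $q<0$.

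\emph{Upper bound.} Already for $s=0$ one has $\tilde{C}_{p,\tilde{V}_{\mathrm{mix},q}}(\lambda K,\cdot)=\lambda^{q-p}\tilde{C}_{p,\tilde{V}_{\mathrm{mix},q}}(K,\cdot)$ and $d\tilde{C}_{p,\tilde{V}_{\mathrm{mix},q}}(B,u)=du$. Hence $\int\langle u,v\rangle_+\,d\tilde{C}_{p,\tilde{V}_{\mathrm{mix},q}}(\lambda B,u)$ is constant for $p=q$ and tends to $\infty$ for $p<q$ as $\lambda\to\infty$, even though $\min h_{\lambda B}=\lambda$. For $1\le s<n$, take the product of a unit ball in $\mathrm{span}\{e_1,\dots,e_s\}$ with a $\lambda$-ball in its orthogonal complement; the same quantity is then at least of order $\lambda^{q-p-s}$. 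So ``$\int\langle u,v_k\rangle_+d\mu_k\to0$ as $R_k\to\infty$'' cannot follow from the form of the density when $p\le q$.

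\emph{Lower bound.} Let $u_k$ minimize $h_k$. By \eqref{2.5}, $\rho_k<h_{\min,k}/\tau_0$ on the cap $\{\langle\xi,u_k\rangle>\tau_0\}$. Hence $\langle\xi,\alpha_k(\xi)\rangle=h_k(\alpha_k(\xi))/\rho_k(\xi)>\tau_0$ there, and the radial density $h_k(\alpha_k(\xi))^{-p}\rho_k(\xi)^{q}$ (times the Gaussian factor) is comparable to $\rho_k^{q-p}$ on that cap. This blows up as $h_{\min,k}\to0$ only if $q<p$. Your alternative, that the measure instead concentrates on a hemisphere, is not argued.

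Together with the point you flagged yourself (Theorem~\ref{t1.1} needs $q<p$), your plan gives nothing for $p\le q<0$. The paper's proof has the same limitation without saying so. It applies Theorem~\ref{t1.1} without $q<p$, and its Step~2 ends with $h_{K_i}^{n-p}(v_i)\,|\mathbb{S}^{n-1}|\ge C_3h_{K_i}^{n-q}(v_i)$, which bounds $h_{K_i}$ only when $q<p$.

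For $q<p$, both bounds do close along your lines, but in the opposite order. First, the cap estimate above gives $|\mu_k|\gtrsim h_{\min,k}^{\,q-p}$, so the bounded total mass yields $h_k\ge r>0$; this is the mechanism of the paper's Step~1. Next, since $R_kv_k\in K_k$, every $x\in\partial K_k$ with normal $\nu$ satisfies $R_k\langle\nu,v_k\rangle\le\langle x,\nu\rangle\le|x|$. Also $r\le h_k(\alpha_k)\le\rho_k$ gives $h_k(\alpha_k)^{-p}\rho_k^{q}\le C_r\rho_k^{\gamma}$ with $\gamma=q-\min\{p,0\}<0$. Splitting at $\rho_k=\sqrt{R_k}$,
\[
c_0\le\int_{\mathbb{S}^{n-1}}\langle u,v_k\rangle_+\,d\mu_k(u)\le C_r\int_{\mathbb{S}^{n-1}}\min\Big\{1,\frac{\rho_k(\xi)}{R_k}\Big\}\rho_k(\xi)^{\gamma}\,d\xi\le C_r|\mathbb{S}^{n-1}|\big(r^{\gamma}R_k^{-1/2}+R_k^{\gamma/2}\big),
\]
which bounds $R_k$. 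This upper-bound argument differs from the paper's, which integrates the equation against the reciprocal of the density and uses $\int\langle v,v_i\rangle_+^{n-q}f_i\ge C_3$. Yours needs no integral identity, only $R_kv_k\in K_k$. It still covers exactly the range $q<p$ of Theorem~\ref{t1.1}. The case $p\le q<0$ of the statement requires a genuinely new ingredient, both for producing the approximating solutions and for the a priori estimates.
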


Finally, we prove the uniqueness of solutions to equation \eqref{1.4}.
\begin{theorem}\label{t1.3}
Let $f$ be a positive smooth function on $\mathbb{S}^{n-1}$.
Suppose $p,q\in\mathbb{R}$ and $p\geq q$. Then equation \eqref{1.4} admits a unique solution.
\end{theorem}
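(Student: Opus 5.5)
We prove the two assertions of Theorem \ref{t1.3} separately: first that \eqref{1.4} has at most one positive smooth solution, then that it has one. Both rely on the same maximum-principle comparison at extremal points of a quotient or of $h$ itself. Write \eqref{1.4} as $F(h)=f$, where
$$F(h)=h^{1-p}\,e^{-\frac12\sum_{k=1}^s\langle\nabla h+hu,e_k\rangle^2}\,(|\nabla h|^2+h^2)^{\frac{q-n}{2}}\det(\nabla^2h+hI).$$

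\emph{Uniqueness.} Let $h_1,h_2$ be two positive smooth solutions, and suppose $\max_{\mathbb{S}^{n-1}}h_1/h_2=\lambda>1$, attained at $u_0$. Set $\tilde h=\lambda h_2$, so $h_1\le\tilde h$ with equality at $u_0$. At $u_0$ we then have $\nabla h_1=\nabla\tilde h$ and $\nabla^2 h_1\le\nabla^2\tilde h$. Hence $\det(\nabla^2h_1+h_1I)\le\lambda^{n-1}\det(\nabla^2h_2+h_2I)$, while the factor $(|\nabla h|^2+h^2)^{\frac{q-n}{2}}$ takes the same value for $h_1$ and $\tilde h$. Using $h_1(u_0)=\lambda h_2(u_0)$ and dividing the two equations at $u_0$ gives
$$1=\frac{F(h_1)}{F(h_2)}(u_0)\le \lambda^{1-p}\,\lambda^{q-n}\,\lambda^{n-1}\,e^{-\frac12(\lambda^2-1)\sum_k\langle\nabla h_2+h_2u,e_k\rangle^2}\le\lambda^{q-p}.$$
When $p>q$ this contradicts $\lambda>1$. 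When $p=q$ we still need $\sum_k\langle x_0,e_k\rangle^2>0$, where $x_0=\nabla h_2+h_2u$ at $u_0$; the degenerate situation $x_0\perp\mathrm{span}\{e_1,\dots,e_s\}$ requires a separate perturbation argument, and I expect this equality case to be the delicate point. By symmetry $\min h_1/h_2\ge1$ as well, so $h_1=h_2$.

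\emph{Existence.} In the range $p<1$, $q<0$, $q<p$ existence is exactly Theorem \ref{t1.1}. For general $p>q$ we run the continuity method along $f_t=(1-t)f_0+tf$. Here $f_0=F(h_0)$ for an explicit solution $h_0$, such as a suitably scaled ball, so $t=0$ is solvable.

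\emph{Openness.} The linearization of $\log F$ at a solution is a uniformly elliptic second-order operator. The same comparison as in the uniqueness step, applied to infinitesimal variations $h+\varepsilon\varphi$ at a maximum and a minimum of $\varphi/h$, shows that its kernel is trivial when $p>q$. The implicit function theorem in $C^{2,\alpha}$ then gives openness.

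\emph{Closedness: $C^0$ bounds.} At a maximum point of $h$ we have $\nabla h=0$ and $\nabla^2h\le0$, so $\det(\nabla^2h+hI)\le h^{n-1}$. The equation then gives
$$\min f_t\le h_{\max}^{q-p}e^{-\frac12 h_{\max}^2\sum_k\langle u,e_k\rangle^2}\le h_{\max}^{q-p},$$
which is an upper bound for $h_{\max}$ since $q<p$. At a minimum point, $\max f_t\ge h_{\min}^{q-p}e^{-h_{\min}^2/2}$. The right-hand side tends to $+\infty$ as $h_{\min}\to0$, so this gives a positive lower bound for $h_{\min}$.

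\emph{Closedness: higher estimates.} The gradient bound follows from $|\nabla h|^2+h^2\le h_{\max}^2$, which holds for support functions. With $C^0$ and $C^1$ control, the right-hand side of the Monge--Amp\`ere equation $\det(\nabla^2h+hI)=G(u,h,\nabla h)$ is bounded above and below. The standard $C^2$ estimate for this class of equations, as used for Theorem \ref{t1.1}, followed by Evans--Krylov and Schauder theory, yields uniform $C^{k,\alpha}$ bounds, so closedness follows.

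The main obstacle is the borderline case $p=q$. There the $C^0$ upper bound above degenerates, and both the uniqueness comparison and the kernel argument rely only on the strict monotonicity of the Gaussian factor. I would try to recover a bound from the factor $e^{-\frac12 h^2\sum_k\langle u,e_k\rangle^2}$ by choosing the extremal point of $h$ away from $\mathrm{span}\{e_1,\dots,e_s\}^{\perp}$. If that fails, existence for $p=q$ would have to be stated under an additional hypothesis.
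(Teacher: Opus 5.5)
\textbf{Where your proposal agrees with the paper.} For $p>q$ your uniqueness argument is the paper's. The paper also works at a maximum point $\hat u$ of $h_1/h_2$, uses $\nabla h_1/h_1=\nabla h_2/h_2$ and $\nabla^2h_1/h_1\le\nabla^2h_2/h_2$ there, and plays the Gaussian factor against $(h_1/h_2)^{q-p}$. Touching with $\lambda h_2$ is the same computation, written more cleanly.

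The paper's proof of this theorem has no existence part; it reads the statement as uniqueness only, as in the abstract. So your continuity-method existence argument for $p>q$ goes beyond the paper. It is sound in outline:
\begin{itemize}
\item the $C^0$ bounds use only $q<p$;
\item the zeroth-order coefficient of the linearization of $\log F$ in $v=\log h$ is $q-p-e^{2v}\sum_k\langle\nabla v+u,e_k\rangle^2<0$, which also uses only $q<p$;
\item $h_0\equiv1$ with $f_0=F(1)$ is a legitimate starting point.
\end{itemize}
For closedness you need not appeal to a ``standard'' $C^2$ estimate; the paper's Theorem \ref{t1.1} contains none. The $C^0$ and $C^1$ bounds, Blaschke selection, Proposition \ref{p3.4} and Theorem \ref{Rt1} give a $C^{2,\alpha}$ limit solution directly.

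\textbf{The gap: the case $p=q$.} The statement includes $p=q$, and you leave it open. At the touching point your comparison yields only $\sum_k\langle Dh_2(u_0),e_k\rangle^2=0$, which is no contradiction. The paper's proof has the same hole: its step ``hence $\exp(h_2/2)\ge\exp(h_1/2)$'' divides by the exponent $\sum_k\langle Dh_1(\hat u),e_k\rangle^2$, which may vanish.

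The missing idea is to globalize the comparison with the strong maximum principle.
\begin{itemize}
\item For $p=q$ and $\tilde h=\lambda h_2$ one has $F(\tilde h)=e^{-\frac12(\lambda^2-1)B_2}f\le F(h_1)$, where $B_2=\sum_k\langle Dh_2,e_k\rangle^2$.
\item Each $h_\tau=(1-\tau)\tilde h+\tau h_1$, $\tau\in[0,1]$, is the support function of a smooth uniformly convex body. Hence $\log F(h_1)-\log F(\tilde h)=Lz$, with $z=h_1-\tilde h$ and $L$ linear and uniformly elliptic with smooth coefficients (principal part $\int_0^1(\nabla^2h_\tau+h_\tau I)^{-1}d\tau$).
\item We have $z\le0$, $z(u_0)=0$ and $Lz\ge0$. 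Move the positive part of the zeroth-order coefficient to the right-hand side, which is allowed because $z\le0$. Hopf's strong maximum principle then gives $z\equiv0$, hence $B_2\equiv0$.
\item But for $s\ge1$, \eqref{2.11} gives $B_2(e_1)\ge\langle Dh_2(e_1),e_1\rangle^2=h_2(e_1)^2>0$, a contradiction.
\end{itemize}

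For $s=0$ no argument can work, because then $F(\lambda h)=\lambda^{q-p}F(h)$. At $p=q$ every centered ball solves \eqref{1.4} with $f\equiv1$, so uniqueness fails. For $p=q=0$, identity \eqref{2.7} forces $\int_{\mathbb{S}^{n-1}}f\,du=\mathcal{H}^{n-1}(\mathbb{S}^{n-1})$, so existence fails as well. Thus at $p=q$ uniqueness needs $s\ge1$ together with the strong maximum principle. Your suspicion that existence there needs extra hypotheses is correct, and the paper does not prove it either.
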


The paper is structured as follows.
Section \ref{S2} presents an overview of several key facts.
Section \ref{S3} constructs an $L_p$-variational formula for a mixed Euclidean-Gaussian weighted dual quermassintegral, thereby introducing the $L_p$-dual mixed Euclidean-Gaussian curvature measure and its associated Minkowski problem.
Section \ref{S4} studies the existence and uniqueness of solutions to Problem \ref{p1.1}

\section{Preliminaries}\label{S2}
In this section, we present an overview of several key facts, for which readers may consult two renowned works by Gardner and Schneider \cite{G,S}, as well as the papers \cite{HLYZ}.

Denote by $\mathcal{K}_e^n$ the set of all origin-symmetric convex bodies in $\mathbb{R}^n$. We will write $C(\mathbb{S}^{n-1})$ for the set of continuous functions on $\mathbb{S}^{n-1}$, and $C^+(\mathbb{S}^{n-1})$ for the set of strictly positive functions in $C(\mathbb{S}^{n-1})$.

For a convex body $K$, the support function, $h(K,\cdot): \mathbb{S}^{n-1}\rightarrow\mathbb{R}$, is defined by
$$h_K(u)=h(K,u)=\max_{u\in\mathbb{S}^{n-1}}\{u\cdot y: y\in K\}.$$
We clearly know that the support function is a continuous and homogeneous convex function with degree one. The Minkowski sum of two convex bodies $K, L$ is defined, for $t>0$, by
$$K+tL=\{x+ty: x\in K, \ y\in L\}.$$
It is easy to see that the set $K+tL$ is also a convex body, and its support function is given by the following expression
$$h(K+tL,\cdot)=h(K,\cdot)+th(L,\cdot).$$
For each $K\in\mathcal{K}_o^n$, the radial function $\rho(K,\cdot)$ is defined by
$$\rho_K(x)=\rho(K,x)=\max\{\lambda: \lambda x\in K\},\ x\in\mathbb{R}^n\backslash\{0\}.$$
Obviously, the radial function is a continuous function and homogeneous of degree $-1$. Moreover, $\partial K=\{\rho_K(v)v: v\in\mathbb{S}^{n-1}\}$.

The polar body $K^*$ of $K\in\mathcal{K}_o^n$ is a convex body in $\mathbb{R}^n$ defined by
$$K^*=\{x\in\mathbb{R}^n: \langle x,y\rangle\leq1,\ \text{for all} \ y\in K\}.$$
The relation between a convex body and its polar body can be represented via their support and radial functions:
\begin{align}\label{2.1}
h_K(x)=\frac{1}{\rho_{K^*}(x)} \ \ \text{and}\ \
\rho_K(x)=\frac{1}{h_{K^*}(x)},
\end{align}
for $x\in\mathbb{R}^n\backslash\{0\}$. It follows that $K^*\in\mathcal{K}_o^n$ and
\begin{align}\label{2.2}
(K^*)^*=K.
\end{align}

For $K\in\mathcal{K}_o^n$, let $h_K(u_{\max})=\max_{u\in\mathbb{S}^{n-1}}h_K(u)$ and $\rho_K(v_{\min})=\min_{u\in\mathbb{S}^{n-1}}\rho_K(v)$. Then the following facts hold (see \cite{CL})
\begin{align}\label{2.3}
\max_{u\in\mathbb{S}^{n-1}}h_K(u)=\max_{v\in\mathbb{S}^{n-1}}\rho_K(v),\ \ \min_{u\in\mathbb{S}^{n-1}}h_K(u)=\min_{v\in\mathbb{S}^{n-1}}\rho_K(v),
\end{align}
\begin{align}\label{2.4}
h_K(u)\geq \langle u,u_{\max}\rangle h_K(u_{\max}) \ \text{for any}\ u\in\mathbb{S}^{n-1},
\end{align}
\begin{align}\label{2.5}
\rho_K(v)\langle v,v_{\min}\rangle\leq \rho_K(v_{\min}) \ \text{for any}\ v\in\mathbb{S}^{n-1}.
\end{align}

For a sequence of convex bodies $K_i\in\mathcal{K}_o^n$, we say that $K_i\rightarrow K_0$ with respect to the Hausdorff metric, provided when $i\rightarrow\infty$
$$\|h_{K_i}-h_{K_0}\|_{\infty}=\max_{u\in\mathbb{S}^{n-1}}
|h_{K_i}(u)-h_{K_0}(u)|\rightarrow0,$$
or equivalently
$$\|\rho_{K_i}-\rho_{K_0}\|_{\infty}=\max_{v\in\mathbb{S}^{n-1}}
|\rho_{K_i}(v)-\rho_{K_0}(v)|\rightarrow0.$$

For each $f\in C^+(\mathbb{S}^{n-1})$, the Wulff shape $[f]$  generated by $f$ is the convex body defined by
$$[f]=\bigcap_{u\in\mathbb{S}^{n-1}}\{x\in\mathbb{R}^n:\ \langle x,u\rangle\leq f(u),\ \text{for all }\ u\in\mathbb{S}^{n-1}\}.$$
It is apparent that $h_{[f]}\leq f$ and $[h_K]=K$ for $K\in\mathcal{K}_o^n$. The convex hull $\langle f\rangle$ generated by $f$ is defined by
$$\langle f\rangle=\text{conv}\{f(u)u: u\in\mathbb{S}^{n-1}\}
\in\mathcal{K}_o^n.$$
Obviously, if $K\in\mathcal{K}_o^n$, then $\langle\rho_K\rangle=K$.
It was proved to the fact (see \cite[Lemma 2.8]{HLYZ}) that
for $f\in C^+(\mathbb{S}^{n-1})$,
\begin{align}\label{2.6}
[f]^*=\langle \frac{1}{f}\rangle.
\end{align}

Let $K, L\in\mathcal{K}_o^n$ and $a,b\geq0$. The $L_p$-Minkowski combination, $a\cdot K+_pb\cdot L$, is defined by the Wulff shape
$$a\cdot K+_pb\cdot L=\bigcap_{u\in\mathbb{S}^{n-1}}\{x\in\mathbb{R}^n: \langle x,u\rangle\leq(ah_K(u)^p+bh_L(u)^p)^{\frac{1}{p}}\}.$$
Moreover, the $L_p$-Minkowski combination can also be defined via the support function
$$h(K+_pt\cdot L,\cdot)^p=h(K,\cdot)^p+th(L,\cdot)^p.$$
In particular, when $p=1$, the $L_p$-Minkowski combination is just the classical Minkowski sum.

The supporting hyperplane of $K\in\mathcal{K}_o^n$ for each $u\in\mathbb{S}^{n-1}$ is defined by
$$H_K(u)=\{x\in\mathbb{R}^n: \langle x,u\rangle=h_K(u)\}.$$
Let $\sigma\subset\partial K$. The spherical image of  $\sigma$ is defined by
$$\boldsymbol{\nu}_K(\sigma)=\{u\in\mathbb{S}^{n-1}: x\in H_K(u)\ \text{for some}\ x\in\sigma\}\subset\mathbb{S}^{n-1}.$$
Suppose that $\sigma_K\subset\partial K$ is the set consisting of all $x\in\partial K$ for which the set $\boldsymbol{\nu}_K(\{x\})$, often abbreviated as $\boldsymbol{\nu}_K(x)$, contains more than a single element. It is well known that $\mathcal{H}^{n-1}(\sigma_K)=0$ (see Schneider \cite[p.84]{S}). Let $\nu_K(x)$ be the unique element in $\boldsymbol{\nu}_K(x)$ for each $x\in\partial K\backslash\sigma_K$. Then we can define the function
$$\nu_K: \partial K\backslash\sigma_K\rightarrow\mathbb{S}^{n-1},$$
which is called the spherical image map (also known as the Gauss map) of $K$ and is continuous from \cite[Lemma 2.2.12]{S}. It will occasionally be convenient to abbreviate $\partial K\backslash\sigma_K$ by $\partial^\prime K$.

From integral equation \cite[(2.31)]{HLYZ}, we obtain that for each bounded integral function $f: \mathbb{S}^{n-1}\rightarrow \mathbb{R}$,
\begin{align}\label{2.7}
\int_{\partial^\prime K}f(\nu_K(x))\frac{\langle x,\nu_K(x)\rangle}{|x|^n}d\mathcal{H}^{n-1}(x)
=\int_{\mathbb{S}^{n-1}}f(\alpha_K(u))du.
\end{align}

The fact of convex body $K$ with outer normal $x\in\mathbb{R}^n$ is defined by
\begin{align}\label{2.8}
F(K,x)=\{y\in K:\ h_K(x)=\langle x,y\rangle\},
\end{align}
which lies in $\partial K$ provided $x\neq0$, and
\begin{align}\label{2.9}
F(K,x)=\partial h_K(x),
\end{align}
where $\partial h_K(x)$ is the subgradient of $h_K$. That is
$$\partial h_K(x)=\{z\in\mathbb{R}^n:\ h_K(y)\geq h_K(x)+\langle z,y-x\rangle \ \text{for each} \ y\in K\},$$
which is a non-empty compact convex set. Note that $h_K(x)$ is differentiable at $x\in \mathbb{R}^n$ if and only if $\partial h_K(x)$ consists of exactly one vector which is the gradient, denoted by $Dh_K(x)$, of $h_K$ at $x$.

The gradient of $h_K$ on $\mathbb{S}^{n-1}$ is denoted by $\nabla h_K$, provided that $h_K$ is viewed as restricted to $\mathbb{S}^{n-1}$. Since $h_K$ is differentiable at $\mathcal{H}^n$ almost all points in $\mathbb{R}^n$ and is positively homogeneous of degree one, $h_K$ is differentiable for $\mathcal{H}^{n-1}$ almost all points in $\mathbb{S}^{n-1}$. Let $h_K$ be differentiable at $u\in\mathbb{S}^{n-1}$ for a convex body $K$, where $u=\nu_K(x)$ is an outer unit normal vector at $x\in \partial K$. Then
\begin{align}\label{2.10}
x=\nu_K^{-1}(u)=Dh_K(u).
\end{align}
From this, it follows that
\begin{align}\label{2.11}
h_K(u)=h_K(\nu_K(x))=\langle x,\nu_K(x)\rangle=\langle Dh_K(u),u\rangle,
\end{align}
$$Dh_K(u)=\nabla h_K(u)+h_K(u)u,$$
and
\begin{align}\label{2.12}
|x|^2=|Dh_K(u)|^2=|\nabla h_K(u)|^2+h_K^2(u).
\end{align}

Let $K$ be a convex body. If $\partial K$ contains no segment then we say that $K$ is strictly convex. We say that $x$ is a $C^1$-smooth point, provided that $K$ has a unique tangential hyperplane at $x\in\partial K$. It is easy to see that $h_K$ is $C^1$ on $\mathbb{S}^{n-1}$ if and only if $K$ is strictly convex and $\partial K$ is $C^1$ if and only if each $x\in\partial K$ is $C^1$-smooth. For $h\in C^2(\mathbb{S}^{n-1})$, if its Hessian matrix $\{\nabla^2h+hI\}$ is positive definite on $\mathbb{S}^{n-1}$ then we call that $h$ is uniformly convex, which means that it is the support function of a convex body $K$ whose boundary is of $C^2$ and is strictly convex.

If $\partial K$ is smooth with positive Gauss curvature, then the surface area measure $S(K,\cdot)$ of $K$ is absolutely continuous with respect to spherical Lebesgue measure and its density is
\begin{align}\label{2.13}
\frac{dS(K,u)}{du}=\det(\nabla^2h_K(u)+h_K(u)I),
\end{align}
for any $u\in\mathbb{S}^{n-1}$.

It is well-known that the determinant of Jacobian for the radial Gauss mapping $\alpha_K$ of $K\in\mathcal{K}_o^n$ is given by
\begin{align}\label{2.14}
|\text{Jac}\alpha_K|=\frac{\rho_K^n}{h_K(\alpha_K)
\det(\nabla^2h_K(\alpha_K)+h_K(\alpha_K)I)}\  \ \text{on} \ \mathbb{S}^{n-1}.
\end{align}

According to the definition of surface area measure, it follows that for each $g\in C(\mathbb{S}^{n-1})$ and a convex body $K$
\begin{align}\label{2.15}
\int_{\partial^\prime K}g(\nu_K(x))d\mathcal{H}^{n-1}(x)=\int_{\mathbb{S}^{n-1}}g(u)dS(K,u).
\end{align}

The following is to recall the notions and facts with respect to the Monge-Amp\`{e}re measure from the survey by Trudinger-Wang \cite{TW}. For a convex function $\phi$ defined in an open convex set $\Omega$ of $\mathbb{R}^n$, we use $D\phi$ and $D^2\phi$ to denote its gradient and its Hessian, respectively. Let $\theta\in\Omega$ be a Borel subset. Define
$$N_\phi(\theta)=\bigcup_{x\in\theta}\partial\phi(x).$$
Then $\mu_\phi(\theta):=\mathcal{H}^n(N_\phi(\theta))$ is called the Monge-Amp\`{e}re measure. If the function $\phi$ is $C^2$ smooth, then the subgradient $\partial\phi$ coincides with the gradient $D\phi$. This has
\begin{align}\label{2.16}
\mu_\phi(\theta)=\mathcal{H}^n\big(\bigcup_{x\in\theta}D\phi(x)\big)
=\int_\theta\det(D^2\phi(x))d\mathcal{H}^n(x).
\end{align}
It is easy to see that the surface area measure $S(K,\cdot)$ of a convex body K in $\mathbb{R}^n$ is a Monge-Amp\`{e}re measure with $h_K$ restricted to $\mathbb{S}^{n-1}$ because it meets
$$\mu_{h_K}(\omega)=\mathcal{H}^{n-1}
\Big(\bigcup_{u\in\omega}\partial h_K(u)\Big)
=\mathcal{H}^{n-1}\Big(\bigcup_{u\in\omega}F(K,u)\Big)
=S(K,\omega),$$
for any Borel set $\omega\subset\mathbb{S}^{n-1}$.

We note that a convex function $\phi$ is the solution of a Monge-Amp\`{e}re equation in the sense of measure (or in the Aleksandrov sense), provided that it solves the corresponding integral formula for $\mu_\phi$ rather than the original formula for $\det(D^2\phi)$.

\section{The $L_p$-dual mixed Euclidean-Gauss curvature measure}\label{S3}
In this section, we introduce the $L_p$-dual mixed Euclidean-Gaussian curvature measure by constructing the $L_p$-variational formula for mixed Euclidean-Gauss weighted dual quermassintegral, and formulate the associated $L_p$-dual mixed Euclidean-Gauss Minkowski problem. Before proceeding, we require the following auxiliary result, whose concise proof can be found in \cite{L}. For the sake of completeness, we supply its detailed proof here.
\begin{lemma}\label{l3.1}
For $p\neq0$ and $K\in\mathcal{K}_o^n$,
let $f: \mathbb{S}^{n-1}\rightarrow\mathbb{R}$ be continuous, and let $\varepsilon>0$ be sufficiently small. Define $h_t\in C^+(\omega)$, for $t\in(-\varepsilon,\varepsilon)$ and $u\in\mathbb{S}^{n-1}$, by
$$h_t(u)=(h_K(u)^p+tf(u)^p)^{\frac{1}{p}}.$$
Then, for almost all $v\in\mathbb{S}^{n-1}$ with respect to the spherical Lebesgue measure,
$$\lim_{t\rightarrow0}\frac{\rho_{[h_t]}(v)
-\rho_K(v)}{t}=\frac{f(\alpha_K(v))^p}
{ph_K(\alpha_K(v))^p}\rho_K(v).$$
Moreover, there exists $M>0$ and $\varepsilon_0>0$ such that
$$|\rho_{[h_t]}(v)-\rho_K(v)|\leq M_0|t|,\ t\in(-\varepsilon_0,\varepsilon_0).$$
\end{lemma}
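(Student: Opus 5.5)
The plan is to reduce the radial derivative of the Wulff shape $[h_t]$ to a support-function derivative of the polar body, and then invoke the known first-variation formula for polar Wulff shapes (Huang--Lutwak--Yang--Zhang, \cite{HLYZ}). By \eqref{2.1} and \eqref{2.6},
\[
\rho_{[h_t]}(v)=\frac{1}{h_{[h_t]^*}(v)}=\frac{1}{h_{\langle 1/h_t\rangle}(v)} ,
\]
and $\log(1/h_t)=-\log h_t$. We expand
\[
\log h_t(u)=\tfrac1p\log\bigl(h_K(u)^p+tf(u)^p\bigr)=\log h_K(u)+t\,\frac{f(u)^p}{p\,h_K(u)^p}+o(t,u).
\]
Here $o(t,\cdot)/t\to0$ uniformly on $\mathbb{S}^{n-1}$. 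This uses that $h_K$ is bounded away from $0$ (because $K\in\mathcal{K}_o^n$) and that $f$ is continuous, hence bounded. We take $\varepsilon$ small enough that $h_K^p+tf^p>0$; if $f$ can be negative we interpret $f^p$ as a given continuous function $g$. Thus
\[
\log\frac1{h_t}=\log\rho_{K^*}+t\,g_0+o(t),\qquad g_0=-\frac{f^p}{p\,h_K^p}.
\]

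Next we apply \cite[Lemma 2.8]{HLYZ} to the convex hulls $\langle \rho_{K^*}e^{tg_0+o(t)}\rangle$. We use the version allowing a uniformly $o(t)$ error term, which is \cite[Lemma 4.1 / Lemma 2.8]{HLYZ}; it can be proved by bracketing the hull between those generated by $\rho_{K^*}e^{t(g_0\pm\delta)}$. This gives, for every $v$ at which $h_{K^*}$ is differentiable,
\[
\lim_{t\to0}\frac{\log h_{\langle 1/h_t\rangle}(v)-\log h_{K^*}(v)}{t}=g_0(\alpha_{K^*}^*(v)),
\]
where $\alpha^*_{K^*}$ is the reverse radial Gauss map of $K^*$. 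The duality $\alpha^*_{K^*}=\alpha_K$ then turns $g_0(\alpha^*_{K^*}(v))$ into $g_0(\alpha_K(v))$. The differentiability set has full measure: $h_{K^*}$ is convex, so it is differentiable $\mathcal{H}^{n-1}$-a.e. on $\mathbb{S}^{n-1}$, which is equivalent to $\alpha_K(v)$ being single-valued for a.e.\ $v$.

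Since $\rho_{[h_t]}=\exp(-\log h_{\langle1/h_t\rangle})$, the chain rule gives
\[
\frac{d}{dt}\Big|_{t=0}\rho_{[h_t]}(v)=-\rho_K(v)\,g_0(\alpha_K(v))=\frac{f(\alpha_K(v))^p}{p\,h_K(\alpha_K(v))^p}\rho_K(v),
\]
which is the claimed limit.

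For the Lipschitz bound, we note that $|\log h_t-\log h_K|\le c|t|$ uniformly for $|t|<\varepsilon_0$, with $c$ depending on $\min h_K$, $\|f\|_\infty$ and $p$. Hence
\[
e^{-c|t|}h_K\le h_t\le e^{c|t|}h_K .
\]
Monotonicity of the Wulff shape then gives $e^{-c|t|}K\subset[h_t]\subset e^{c|t|}K$, so
\[
|\rho_{[h_t]}-\rho_K|\le (e^{c|t|}-1)\max\rho_K\le M_0|t| .
\]

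The main obstacle is the a.e.\ differentiation step for the convex hull, i.e.\ correctly importing \cite{HLYZ}'s lemma and checking that the uniform $o(t)$ remainder is harmless. If direct citation is not acceptable, we prove it by hand. For the lower bound, at a differentiability point $v$ we use $h_{\langle\rho_t\rangle}(v)\ge\rho_t(u_0)\langle u_0,v\rangle$ with $u_0=\alpha^*_{K^*}(v)$. For the upper bound, we pick maximizers $u_t$, show $u_t\to u_0$ by uniqueness of the maximizer at differentiability points, and compare.
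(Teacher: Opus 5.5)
Your proof is correct. For the limit formula you follow the paper's route. You pass to the polar body via \eqref{2.1} and \eqref{2.6}, apply \cite[Lemma 4.1]{HLYZ} to $\varrho_t=1/h_t$ with $g=-f^p/(p\,h_K^p)$, and use $\alpha^*_{K^*}=\alpha_K$. The paper differentiates $h_{\langle 1/h_t\rangle}$ and multiplies by $-\rho_K^2$, while you differentiate its logarithm and apply the chain rule; this is the same computation.

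The genuine difference is in the Lipschitz bound. The paper proceeds in three steps:
\begin{itemize}
\item it uses Aleksandrov's convergence theorem to get $m_1<\rho_{[h_t]}<m_2$;
\item it imports the estimate $|\log\rho_{[h_t]}-\log\rho_K|\le M|t|$ from \cite[Lemma 4.1]{HLYZ};
\item it converts that estimate via $|s-1|\le m_3|\log s|$ on $(0,m_3)$.
\end{itemize}
You instead note that $e^{-c|t|}h_K\le h_t\le e^{c|t|}h_K$. Monotonicity and positive homogeneity of the Wulff shape then give $e^{-c|t|}K\subset[h_t]\subset e^{c|t|}K$, hence $|\rho_{[h_t]}-\rho_K|\le(e^{c|t|}-1)\max\rho_K$.

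Your route has several advantages:
\begin{itemize}
\item it is shorter and self-contained, needing no compactness theorem and no second appeal to \cite{HLYZ};
\item it yields an explicit constant, e.g.\ $M_0=c\,e^{c\varepsilon_0}\max\rho_K$;
\item it works equally well for any perturbation $\log h_t=\log h_K+tg+o(t)$ with uniform remainder.
\end{itemize}
It also sidesteps a slip in the paper's justification of $|s-1|\le m_3|\log s|$. There the stated step $s-1\le\log s$ on $[1,m_3]$ is reversed, although the conclusion $s-1\le m_3\log s$ on that interval is still true by the mean value theorem.
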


\begin{proof}
From \cite[Lemma 4.1]{HLYZ}, we have, for $u\in\mathbb{S}^{n-1}$ and $t\in(-\varepsilon,\varepsilon)$
\begin{align*}
&\frac{d}{dt}(\log h_{\langle\varrho_t\rangle}(u))\Big|_{t=0}
\lim_{t\rightarrow0}\frac{h_{\langle\varrho_t\rangle}(u)-
h_{\langle\varrho_0\rangle}(u)}{t}\\
&=\frac{1}{h_{\langle\varrho_t\rangle}(u)}\Big|_{t=0}
\lim_{t\rightarrow0}\frac{h_{\langle\varrho_t\rangle}(u)-
h_{\langle\varrho_0\rangle}(u)}{t}\\
&=g(\alpha^\ast_{\langle\varrho_0\rangle}(u)),
\end{align*}
i.e.,
\begin{align}\label{3.1}
\lim_{t\rightarrow0}\frac{h_{\langle\varrho_t\rangle}(u)-
h_{\langle\varrho_0\rangle}(u)}{t}
=h_{\langle\varrho_0\rangle}(u)
g(\alpha^\ast_{\langle\varrho_0\rangle}(u)),
\end{align}
where $g: \mathbb{S}^{n-1}\rightarrow \mathbb{R}$ is continuous and $\varrho_t: \mathbb{S}^{n-1}\rightarrow(0,\infty)$ is defined by
$$\log\varrho_t(u)=\log\varrho_0(u)+tg(u)+o(t,u).$$

By (\ref{2.1}) and (\ref{2.6}), one has
\begin{align}\label{3.2}
\nonumber\lim_{t\rightarrow0}\frac{\rho_{[h_t]}(u)
-\rho_K(u)}{t}&=\lim_{t\rightarrow0}
\frac{h_{[h_t]^*}^{-1}(u)
-h_{K^*}^{-1}(u)}{t}\\
\nonumber&=-\lim_{t\rightarrow0}\frac{h_{
\langle\frac{1}{h_t}\rangle}(u)
-h_{\langle\frac{1}{h_K}\rangle}(u)}
{th_{\langle\frac{1}{h_t}\rangle}(u)
h_{\langle\frac{1}{h_K}\rangle}(u)}\\
\nonumber&=-\frac{1}{h_{
\langle\frac{1}{h_K}\rangle}(u)^2}
\lim_{t\rightarrow0}\frac{h_{\langle\frac{1}{h_t}\rangle}(u)
-h_{\langle\frac{1}{h_K}\rangle}(u)}{t}\\
&=-\rho_K^2(u)\lim_{t\rightarrow0}
\frac{h_{\langle\frac{1}{h_t}\rangle}(u)
-h_{\langle\frac{1}{h_0}\rangle}(u)}{t}.
\end{align}

Let $\varrho_t=\frac{1}{h_t}$. Then
$$\log h_t=\log h_K-tg-o(t).$$
Since
$$h_t(u)=(h_K(u)^p+tf(u)^p)^{\frac{1}{p}},\ u\in\mathbb{S}^{n-1},$$
one has
$$\log h_t=\log h_K+\frac{tf^p}{ph_K^p}+o(t).$$
Thus
\begin{align*}
g=-\frac{f^p}{ph_K^p}.
\end{align*}
By (\ref{2.2}), (\ref{3.1}), (\ref{3.2}) and \cite[Lemma 2.6]{HLYZ}, we have
\begin{align*}
\lim_{t\rightarrow0}\frac{\rho_{[h_t]}(v)
-\rho_K(v)}{t}&=\frac{1}{p}\rho_K^2(v)
h_{\langle\frac{1}{h_0}\rangle}(v)
g(\alpha^\ast_{\langle\varrho_0\rangle}(v))\\
&=\frac{f(\alpha_K(v))^p}{ph_K(\alpha_K(v))^p}\rho_K(v).
\end{align*}

Since $[h_0]=K\in\mathcal{K}_o^n$ and $[h_t]\rightarrow K$ as $t\rightarrow0$ by Aleksandrov's convergence theorem for Wulff shapes (see \cite{S}), there exist $m_1, m_2\in(0,\infty)$ and $\varepsilon_0>0$ such that
$$0<m_1<\rho_{[h_t]}<m_2<\infty \ \ on \ \ \mathbb{S}^{n-1},$$
for $t\in(-\varepsilon_0,\varepsilon_0)$. Thus
\begin{align}\label{3.3}
0<\frac{\rho_{[h_t]}}{\rho_{K}}<m_3 \ \ on \ \  \mathbb{S}^{n-1},
\end{align}
for some $m_3>1$. Observe that $s-1\geq\log s$ for $s\in(0,1)$ while $s-1\leq\log s\leq m_3\log s$ for $s\in[1,m_3]$. Thus, when $s\in(0,m_3)$,
$$|s-1|\leq m_3|\log s|.$$
From (\ref{3.3}), we have
$$\bigg|\frac{\rho_{[h_t]}}{\rho_{K}}-1\bigg|\leq m_3\bigg|\log\frac{\rho_{[h_t]}}{\rho_{K}}\bigg|.$$
This, together with (\ref{3.3}) and fact that $|\log\rho_{[h_t]}-\log\rho_{K}|\leq M|t|$ where $M>0$ (by \cite[Lemma 4.1]{HLYZ}), implies that
$$|\rho_{[h_t]}-\rho_{K}|\leq m_2\rho_{K}|\log\rho_{[h_t]}-\log\rho_{K}|\leq m_2m_3M|t|=M_0|t|$$
on $\mathbb{S}^{n-1}$ for $t\in(-\varepsilon_0,\varepsilon_0)$. The proof is completed.
\end{proof}

The following variational formula induces the $L_p$-dual mixed Euclidean-Gauss curvature measure.

\begin{theorem}\label{t3.2}
Let $p\neq0$ and $K\in\mathcal{K}_o^n$. Suppose $f\in C(\mathbb{S}^{n-1})$, and define $h_t(u)=(h_K(u)^p+tf(u)^p)^{\frac{1}{p}}$, $u\in\mathbb{S}^{n-1}$.
Then
$$\lim_{t\rightarrow0}\frac{\tilde{V}_{mix,q}([h_t])
-\tilde{V}_{mix,q}(K)}{t}=-\frac{1}{p}
\int_{\mathbb{S}^{n-1}}f(v)^p
d\tilde{C}_{p,\tilde{V}_{mix,q}}(K,u).$$
\end{theorem}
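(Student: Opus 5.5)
We write the functional in polar coordinates and differentiate under the integral sign using Lemma \ref{l3.1}. For $K\in\mathcal{K}_o^n$, setting $z=rv$ with $v\in\mathbb{S}^{n-1}$ gives
$$\tilde{V}_{mix,q}(K)=\int_{\mathbb{S}^{n-1}}\int_{\rho_K(v)}^{\infty}\exp\Big(-\tfrac{r^2}{2}\sum_{i=1}^s\langle v,e_i\rangle^2\Big)r^{q-1}\,dr\,dv .$$
Write $\Phi(v,r)=\exp\big(-\tfrac{r^2}{2}\sum_{i=1}^s\langle v,e_i\rangle^2\big)r^{q-1}$. Then
$$\tilde{V}_{mix,q}([h_t])-\tilde{V}_{mix,q}(K)=-\int_{\mathbb{S}^{n-1}}\int_{\rho_K(v)}^{\rho_{[h_t]}(v)}\Phi(v,r)\,dr\,dv .$$

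The first step is the pointwise limit. By Lemma \ref{l3.1}, for almost every $v$ the quotient $(\rho_{[h_t]}(v)-\rho_K(v))/t$ tends to $\frac{f(\alpha_K(v))^p}{p\,h_K(\alpha_K(v))^p}\rho_K(v)$. Since $\Phi$ is continuous in $r$ near $\rho_K(v)>0$, the mean value theorem gives, for almost every $v$,
$$\frac1t\int_{\rho_K(v)}^{\rho_{[h_t]}(v)}\Phi(v,r)\,dr\to\Phi(v,\rho_K(v))\,\frac{f(\alpha_K(v))^p}{p\,h_K(\alpha_K(v))^p}\rho_K(v).$$

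The second step is domination. By Lemma \ref{l3.1} we have $|\rho_{[h_t]}-\rho_K|\le M_0|t|$ and $0<m_1<\rho_{[h_t]}<m_2$ for $|t|<\varepsilon_0$. Hence the integrand divided by $t$ is bounded by $M_0\max_{r\in[m_1,m_2]}r^{q-1}$, a constant because the exponential factor is at most $1$. Dominated convergence then yields
$$\lim_{t\to0}\frac{\tilde{V}_{mix,q}([h_t])-\tilde{V}_{mix,q}(K)}{t}=-\frac1p\int_{\mathbb{S}^{n-1}}\frac{f(\alpha_K(v))^p}{h_K(\alpha_K(v))^p}\,\rho_K(v)^q\exp\Big(-\tfrac12\sum_{i=1}^s\langle\rho_K(v)v,e_i\rangle^2\Big)dv .$$

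The third step, which is the main bookkeeping obstacle, converts this spherical integral into an integral over $\partial' K$. Writing $x=\rho_K(v)v$ (so $\alpha_K(v)=\nu_K(x)$ and $|x|=\rho_K(v)$), we apply \eqref{2.7}, i.e. $dv=\frac{\langle x,\nu_K(x)\rangle}{|x|^n}d\mathcal{H}^{n-1}(x)$. Since $h_K(\nu_K(x))=\langle x,\nu_K(x)\rangle$, the integrand becomes
$$f(\nu_K(x))^p\langle x,\nu_K(x)\rangle^{1-p}\exp\Big(-\tfrac12\sum_{i=1}^s\langle x,e_i\rangle^2\Big)|x|^{q-n}.$$
Formula \eqref{2.7} is stated for functions of $\nu_K(x)$, so we use its underlying change of variables (the radial map $\partial K\to\mathbb{S}^{n-1}$, bi-Lipschitz with the stated Jacobian) for general bounded Borel integrands on $\partial' K$; this is the point needing care.

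Finally, we define $\tilde{C}_{p,\tilde{V}_{mix,q}}(K,\cdot)$ by \eqref{1.2} and push the measure forward by $\nu_K$. The pushforward identity $\int_{\partial'K}g(\nu_K(x))\,d\mu(x)=\int_{\mathbb{S}^{n-1}}g\,d(\nu_{K})_\ast\mu$ turns the last integral into $\int_{\mathbb{S}^{n-1}}f(u)^p\,d\tilde{C}_{p,\tilde{V}_{mix,q}}(K,u)$, which proves the formula.
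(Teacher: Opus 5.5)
Your proposal is correct and follows essentially the same route as the paper. Both write $\tilde{V}_{mix,q}$ in polar coordinates, use the mean value theorem with Lemma \ref{l3.1} for the pointwise limit and for the uniform bound (your $M_0\max_{[m_1,m_2]}r^{q-1}$ plays the role of the paper's $\lambda(u)$) in dominated convergence, and then use \eqref{2.7} to pass to $\partial' K$ and recover the measure of Definition \ref{d3.3}. Your remark that \eqref{2.7} is needed in its general radial change-of-variables form, because the integrand depends on $x$ and not only on $\nu_K(x)$, is correct and is a point the paper passes over.
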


\begin{proof}
From the definitions of $\tilde{V}_{\mathrm{mix},q}$ and $h_t$, we write $\tilde{V}_{\mathrm{mix},q}([h_t])$ in polar coordinates:
$$\tilde{V}_{mix,q}([h_t])=\int_{\mathbb{S}^{n-1}}
\int^\infty_{\rho_{[h_t]}}\exp\Big(-\tfrac12 r^2
\sum_{k=1}^s\langle u,e_k\rangle^2\Big)r^{q-1}drdu.$$
Define the integral function
$$\mathcal{F}(a)=\int^\infty_a \exp\Big(-\tfrac12r^2
\sum_{k=1}^s\langle u,e_k\rangle^2\Big)r^{q-1}dr, \ \ a\in[0,\infty).$$
By the mean-value theorem, for each fixed $u$, there exists $\xi$ lying between $\rho_{[h_t]}(u)$ and $\rho_K(u)$ such that
\begin{align}\label{3.4}
\nonumber|\mathcal{F}(\rho_{[h_t]})-\mathcal{F}(\rho_{K})|
&=|\mathcal{F}^\prime(\xi)||\rho_{[h_t]}-\rho_{K}|\\
&\leq \exp\Big(-\tfrac12\xi^2\sum_{k=1}^s\langle u,e_k\rangle^2\Big)\xi^{q-1}M_0|t|\leq\lambda(u)|t|,
\end{align}
where
$$\lambda(u)=\max_{\xi\in[\lambda_1,\lambda_2]}
\exp\Big(-\tfrac12\xi^2\sum_{k=1}^s\langle u,e_k\rangle^2\Big)\xi^{q-1}M_0,$$
and the constant $M_0$ is provided by Lemma \ref{l3.1}.

Combining Lemma \ref{l3.1}, estimate \eqref{3.4}, and identity \eqref{2.7}, we apply the dominated convergence theorem to interchange the limit and integration:
\begin{align*}
&\lim_{t\rightarrow0}\frac{\tilde{V}_{mix,q}([h_t])-
\tilde{V}_{mix,q}(K)}{t}
=\lim_{t\rightarrow0}\int_{\mathbb{S}^{n-1}}
\frac{\mathcal{F}(\rho_{[h_t]})-\mathcal{F}(\rho_{K})}{t}du\\
&=-\frac{1}{p}\int_{\mathbb{S}^{n-1}}\exp\Big(-\tfrac12
\sum_{k=1}^s\langle\rho_K(u)u,e_k\rangle^2\Big)
\rho_K^{q}(u)\frac{f(\alpha_K(u))^p}{h_K(\alpha_K(u))^p}du\\
&=-\frac{1}{p}\int_{\mathbb{S}^{n-1}}f(\alpha_K(u))^p
\exp\Big(-\tfrac12\sum_{k=1}^s\langle\rho_K(u)u,e_k\rangle^2\Big)
\rho_K^{q-n}(u)\frac{\rho_K^{n}(u)}
{h_K(\alpha_K(u))^p}du\\
&=-\frac{1}{p}\int_{\partial^\prime K}f(\nu_K(x))^p
\exp\Big(-\tfrac12
\sum_{k=1}^s\langle x,e_k\rangle^2\Big)
|x|^{q-n}\langle x,\nu_K(x)\rangle^{1-p}d\mathcal{H}^{n-1}(x)\\
&=-\frac{1}{p}\int_{\mathbb{S}^{n-1}}f(u)^p
d\widetilde{C}_{p,\tilde{V}_{mix,q}}(K,u).
\end{align*}
This completes the proof.
\end{proof}

This variational formula in Theorem \ref{t3.2} motivates the definition of the $L_p$-dual mixed Euclidean-Gaussian curvature measure, which we formalize below.
\begin{definition}\label{d3.3}
For $K\in\mathcal{K}_o^n$ and $p,q\in\mathbb{R}$, define the $L_p$-dual mixed Euclidean-Gauss curvature measure of $K$ by: for Borel set $\omega\subset\mathbb{S}^{n-1}$,
$$\widetilde{C}_{p,\tilde{V}_{mix,q}}(K,\omega)
=\int_{\nu_K^{-1}(\omega)}\exp\Big(-\tfrac12\sum_{k=1}^s\langle x,e_k\rangle^2\Big)(x\cdot\nu_K(x))^{1-p}|x|^{q-n}
d\mathcal{H}^{n-1}(x).$$
\end{definition}

The following result gives the weak convergence for the $L_p$-dual mixed Euclidean-Gauss curvature measure.

\begin{proposition}\label{p3.4}
If $K_i\in\mathcal{K}_o^n$ converges to $K_0\in\mathcal{K}_o^n$ in Hausdorff metric, then
$\widetilde{C}_{p,\tilde{V}_{mix,q}}(K_i,\cdot)$ converges to $\widetilde{C}_{p,\tilde{V}_{mix,q}}(K_0,\cdot)$ weakly.
\end{proposition}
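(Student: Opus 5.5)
The plan is to move the measure to the sphere through the radial map, where the Hausdorff convergence $K_i\to K_0$ becomes uniform convergence of radial functions and almost-everywhere convergence of the radial Gauss maps. Fix $g\in C(\mathbb{S}^{n-1})$. As in the last steps of the proof of Theorem \ref{t3.2}, apply \eqref{2.7} with the bounded integrand $g(\nu_K(x))\langle x,\nu_K(x)\rangle^{-p}\exp(\cdots)|x|^{q}$. This gives, for every $K\in\mathcal{K}_o^n$,
\begin{align*}
\int_{\mathbb{S}^{n-1}}g\,d\widetilde{C}_{p,\tilde{V}_{mix,q}}(K,\cdot)
=\int_{\mathbb{S}^{n-1}}g(\alpha_K(v))\,h_K(\alpha_K(v))^{-p}
\exp\Big(-\tfrac12\rho_K(v)^2\sum_{k=1}^s\langle v,e_k\rangle^2\Big)\rho_K(v)^{q}\,dv.
\end{align*}
We use $x=\rho_K(v)v$ and $\langle x,\nu_K(x)\rangle=h_K(\alpha_K(v))$. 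Weak convergence then amounts to showing that these spherical integrals for $K_i$ converge to the one for $K_0$, for every continuous $g$.

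Next we get uniform bounds. Since $K_i\to K_0\in\mathcal{K}_o^n$, there are constants $0<m\le M$ with $m\le h_{K_i},\rho_{K_i}\le M$ on $\mathbb{S}^{n-1}$ for all large $i$; here we use \eqref{2.3} and the equivalence of Hausdorff convergence for support and radial functions. The integrand is then bounded by
$$\|g\|_\infty \max(m^{-p},M^{-p})\max(m^{q},M^{q}),$$
since the exponential factor is at most $1$. This gives an integrable dominating constant on $\mathbb{S}^{n-1}$, regardless of the signs of $p$ and $q$.

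We then prove pointwise convergence almost everywhere. The radial functions $\rho_{K_i}\to\rho_{K_0}$ uniformly, so the exponential and $\rho^q$ factors converge everywhere. The main obstacle is the factor involving $\alpha_{K_i}(v)$. We invoke the standard fact (\cite[Lemma 2.2]{HLYZ}, or Schneider) that if $K_i\to K_0$ then $\alpha_{K_i}(v)\to\alpha_{K_0}(v)$ for every $v$ at which $\alpha_{K_0}$ is single-valued, i.e. for almost every $v$. To justify it directly, take any limit point $u$ of $\alpha_{K_i}(v)$. Passing to the limit in $\langle \rho_{K_i}(v)v,\alpha_{K_i}(v)\rangle=h_{K_i}(\alpha_{K_i}(v))$, using uniform convergence of $h_{K_i}$ and $\rho_{K_i}$, shows that $u$ is an outer normal of $K_0$ at $\rho_{K_0}(v)v$. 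Hence $u=\alpha_{K_0}(v)$ by uniqueness. Continuity of $g$ and the uniform convergence $h_{K_i}\to h_{K_0}$ (together with equicontinuity and the lower bound $m$) then give
$$g(\alpha_{K_i}(v))h_{K_i}(\alpha_{K_i}(v))^{-p}\to g(\alpha_{K_0}(v))h_{K_0}(\alpha_{K_0}(v))^{-p}$$
almost everywhere. The dominated convergence theorem finishes the proof.
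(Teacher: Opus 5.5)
Your proof is correct and follows the same route as the paper. You rewrite $\int g\,d\widetilde{C}_{p,\tilde{V}_{mix,q}}(K_i,\cdot)$ as a spherical integral via \eqref{2.7}, in its general radial change-of-variables form; you use the uniform two-sided bounds coming from $C^{-1}B\subset K_i\subset CB$ and the almost-everywhere convergence $\alpha_{K_i}\to\alpha_{K_0}$ from \cite[Lemma 2.2]{HLYZ}; and you finish with dominated convergence. Your version is slightly more careful than the paper's, which asserts uniform convergence of $h_{K_i}(\alpha_{K_i})$ and of the integrand without justification, whereas you claim and verify only the almost-everywhere convergence that the argument actually needs.
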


\begin{proof}
Let $f: \omega\rightarrow\mathbb{R}$ be a continuous function. Suppose that $K_i\rightarrow K_0$ with respect to Hausdorff metric. Then
$\rho_{K_i}\rightarrow\rho_{K_0}$ uniformly on $\mathbb{S}^{n-1}$, and $\alpha_{K_i}\rightarrow\alpha_{K_0}$ almost everywhere on $\mathbb{S}^{n-1}$ with respect to spherical Lebesgue measure (by \cite[Lemma 2.2]{HLYZ}).
Moreover, $h_{K_i}(\alpha_{K_i},\cdot)\rightarrow
h_{K_0}(\alpha_{K_0},\cdot)$ uniformly on $\mathbb{S}^{n-1}$.
Since $K_0\in\mathcal{K}_o^n$, there exists a constant $C>0$ such that $C^{-1}B\subset K_i,K_0\subset CB$ for
sufficiently large $i$.
Thus
\begin{align*}
&\exp\Big(-\tfrac12\sum_{k=1}^s\langle \rho_{K_i}(u)u,e_k\rangle^2\Big)\rho_{K_i}^q(u)
\frac{f(\alpha_{K_i}(u))^p}{h_{K_i}(\alpha_{K_i}(u)^p}\\
&\rightarrow \exp\Big(-\tfrac12\sum_{k=1}^s\langle \rho_{K_0}(u)u,e_k\rangle^2\Big)\rho_{K_0}^q(u)
\frac{f(\alpha_{K_0}(u))^p}{h_{K_0}(\alpha_{K_0}(u)^p}
\end{align*}
uniformly on $\mathbb{S}^{n-1}$.

By Definition \ref{d3.3} and Lebesgue's dominated convergence theorem, one has
\begin{align*}
&\int_{\mathbb{S}^{n-1}}f(v)^p
d\widetilde{C}_{p,\tilde{V}_{mix,q}}(K_i,v)\\
&=\int_{\partial^\prime {K_i}}f(\nu_{K_i}(x))^p\exp\Big(-\tfrac12\sum_{k=1}^s\langle x,e_k\rangle^2\Big)(x\cdot\nu_{K_i}(x))^{1-p}
|x|^{q-n}d\mathcal{H}^{n-1}(x)\\
&=\int_{\mathbb{S}^{n-1}}f(\alpha_{K_i}(u))^p
\exp\Big(-\tfrac12\sum_{k=1}^s\langle \rho_{K_i}(u)u,e_k\rangle^2\Big)
\frac{\rho_{K_i}^q(u)}{h_{K_i}(\alpha_{K_i}(u))^{p}}du\\
&\rightarrow\int_{\mathbb{S}^{n-1}}f(\alpha_{K_0}(u))^p
\exp\Big(-\tfrac12\sum_{k=1}^s\langle \rho_{K_0}(u)u,e_k\rangle^2\Big)
\frac{\rho_{K_0}^q(u)}{h_{K_0}(\alpha_{K_0}(u))^{p}}du\\
&=\int_{\partial^\prime {K_0}}f(\nu_{K_0}(x))^p\exp\Big(-\tfrac12\sum_{k=1}^s\langle x,e_k\rangle^2\Big)(x\cdot\nu_{K_0}(x))^{1-p}
|x|^{q-n}d\mathcal{H}^{n-1}(x)\\
&=\int_{\mathbb{S}^{n-1}}f(v)^p
d\widetilde{C}_{p,\tilde{V}_{mix,q}}({K_0},v).
\end{align*}
The proof is completed.
\end{proof}

\begin{proposition}\label{p3.5}
For $p\neq0$, $q\in\mathbb{R}$ and $K\in\mathcal{K}_o^n$, the $L_p$-dual mixed Euclidean-Gauss curvature measure $\widetilde{C}_{\gamma,p,q}(K,\cdot)$ is absolutely continuous with respect to the surface area measure $S(K,\cdot)$.
\end{proposition}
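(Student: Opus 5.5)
The plan is to express the measure as an integral against $S(K,\cdot)$ with an explicit bounded density. Absolute continuity with respect to $S(K,\cdot)$ then follows immediately.

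\textbf{Step 1 (pointwise bounds on the weight).} Since $K\in\mathcal{K}_o^n$, there exist $0<r\le R<\infty$ with $rB\subset K\subset RB$. Hence for every $x\in\partial^\prime K$ we have $r\le |x|\le R$ and $r\le h_K(\nu_K(x))=\langle x,\nu_K(x)\rangle\le R$. The exponential factor $\exp\big(-\tfrac12\sum_{k=1}^s\langle x,e_k\rangle^2\big)$ lies in $[e^{-R^2/2},1]$. Consequently the integrand
$$G(x)=\exp\Big(-\tfrac12\sum_{k=1}^s\langle x,e_k\rangle^2\Big)\langle x,\nu_K(x)\rangle^{1-p}|x|^{q-n}$$
is a nonnegative Borel function on $\partial^\prime K$, bounded above by a constant $c=c(r,R,p,q,n)$. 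The bound holds for all real $p,q$, since both bases lie in $[r,R]$.

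\textbf{Step 2 (pass to the sphere).} By Definition \ref{d3.3},
$$\widetilde{C}_{p,\tilde{V}_{mix,q}}(K,\omega)=\int_{\nu_K^{-1}(\omega)}G(x)\,d\mathcal{H}^{n-1}(x)\le c\,\mathcal{H}^{n-1}(\nu_K^{-1}(\omega)).$$
Here we discard $\sigma_K$, which has $\mathcal{H}^{n-1}(\sigma_K)=0$. By the definition of the surface area measure (equivalently, (\ref{2.15}) applied to indicator functions via a monotone class argument), $\mathcal{H}^{n-1}(\nu_K^{-1}(\omega))=S(K,\omega)$. Therefore
$$\widetilde{C}_{p,\tilde{V}_{mix,q}}(K,\omega)\le c\,S(K,\omega)$$
for every Borel $\omega$. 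In particular, $S(K,\omega)=0$ implies $\widetilde{C}_{p,\tilde{V}_{mix,q}}(K,\omega)=0$, which is the claim.

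If one also wants the density, note that on $\nu_K^{-1}(\omega)\cap\partial^\prime K$ we have $\langle x,\nu_K(x)\rangle=h_K(\nu_K(x))$. Pushing $G\,d\mathcal{H}^{n-1}$ forward by $\nu_K$ (disintegrating over fibres $F(K,u)$, which are points for $S$-a.e.\ direction in which $K$ is smooth) gives an explicit Radon--Nikodym derivative. This recovers (\ref{1.3}) in the smooth case, where $dS=\det(\nabla^2h_K+h_KI)\,du$ by (\ref{2.13}) and $x=\nabla h_K+h_Ku$ by (\ref{2.10}).

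\textbf{Main obstacle.} The only delicate point is the measure-theoretic identity $\mathcal{H}^{n-1}(\nu_K^{-1}(\omega))=S(K,\omega)$ for all Borel $\omega$ (Borel measurability of $\nu_K^{-1}(\omega)$ and the extension from continuous $g$ in (\ref{2.15}) to indicators). I would handle it by taking it as the standard definition of $S(K,\cdot)$ from Schneider \cite{S}. Alternatively, one can approximate indicators of closed sets by decreasing continuous functions in (\ref{2.15}) and then extend to all Borel sets by regularity of finite Borel measures on $\mathbb{S}^{n-1}$. The domination argument in Step 2 does not need the exact density, so no differentiability of $h_K$ is required.
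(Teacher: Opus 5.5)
Your proof is correct and is essentially the paper's argument: both rest on the identity $S(K,\omega)=\mathcal{H}^{n-1}(\nu_K^{-1}(\omega))$ and on the fact that the defining integral over an $\mathcal{H}^{n-1}$-null set vanishes. Your uniform bound on the integrand, obtained from $rB\subset K\subset RB$, is not needed for absolute continuity, since a nonnegative integrand over a null set already integrates to zero; it does, however, give the slightly stronger domination $\widetilde{C}_{p,\tilde{V}_{mix,q}}(K,\cdot)\le c\,S(K,\cdot)$, i.e.\ a bounded Radon--Nikodym derivative.
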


\begin{proof}
For any Borel set $\omega\subset\mathbb{S}^{n-1}$ and $K\in\mathcal{K}_o^n$, the surface area measure $S(K,\cdot)$ of $K$ is defined as (see \cite{S})
$$S(K,\omega)=\mathcal{H}^{n-1}(\nu_K^{-1}(\omega)).$$

Let $S(K,\omega)=0$. It is equivalent to $\mathcal{H}^{n-1}(\nu_K^{-1}(\omega))=0$.
Since we are integrating over a set of measure zero, it follows that
$$\widetilde{C}_{\gamma,p,q}(K,\omega)
=\int_{\nu_K^{-1}(\omega)}\exp\Big(-\tfrac12\sum_{k=1}^s\langle x,e_k\rangle^2\Big)(x\cdot\nu_{K}(x))^{1-p}
|x|^{q-n}d\mathcal{H}^{n-1}(x)=0.$$
This implies that
$\widetilde{C}_{\gamma,p,q}(K,\cdot)$ is absolutely continuous with respect to the surface area measure $S(K,\cdot)$.
\end{proof}

{\bf The $L_p$-dual mixed Euclidean-Gauss Minkowski problem}.
Given $p,q\in\mathbb{R}$,
find necessary and sufficient conditions on a nonzero finite Borel measure $\mu$ on $\mathbb{S}^{n-1}$ guaranteeing the existence of a convex body $K$ satisfying
\begin{align*}
\mu=\tilde{C}_{p,\tilde{V}_{\mathrm{mix},q}}(K,\cdot)?
\end{align*}
If $K$ exists, to what extent is it unique$?$

Let $K$ be strictly convex. Combining Definition \ref{d3.3} with \eqref{2.11}, \eqref{2.12} and \eqref{2.15}, for every Borel set $\omega\subset\mathbb{S}^{n-1}$:
\begin{align*}
\tilde{C}_{p,\tilde{V}_{\mathrm{mix},q}}(K,\omega)
&=\int_{\nu_K^{-1}(\omega)}\exp\Big(-\tfrac12\sum_{k=1}^s
\langle x,e_k\rangle^2\Big)\langle x,\nu_K(x)\rangle^{1-p}|x|^{\frac{q-n}{2}}d\mathcal{H}^{n-1}(x)\\
&=\int_{\omega}\exp\Big(-\tfrac12\sum_{k=1}^s
\langle Dh_K(u),e_k\rangle^2\Big)
h_K^{1-p}(|\nabla h_K|^2+h_K^2)^{\frac{q-n}{2}}dS(K,u).
\end{align*}
That is,
\begin{align}\label{3.5}
\nonumber&d\tilde{C}_{p,\tilde{V}_{\mathrm{mix},q}}(K,\cdot)\\
&=\exp\Big(-\tfrac12\sum_{k=1}^s
\langle Dh_K(u),e_k\rangle^2\Big)
h_K^{1-p}(|\nabla h_K|^2+h_K^2)^{\frac{q-n}{2}}dS(K,u).
\end{align}
In view of \eqref{2.13} and \eqref{3.5}, if $K\in\mathcal{K}_o^n$ has a smooth boundary $\partial K$ with positive Gauss curvature, then $\tilde{C}_{p,\tilde{V}_{\mathrm{mix},q}}(K,\cdot)$ is absolutely continuous with respect to the spherical Lebesgue measure. Consequently,
\begin{align}\label{3.6}
\nonumber&\frac{d\tilde{C}_{p,\tilde{V}_{\mathrm{mix},q}}(K,u)}{du}\\
&=\exp\Big(-\tfrac12\sum_{k=1}^s
\langle Dh_K,e_k\rangle^2\Big)h_K^{1-p}(|\nabla h_K|^2+h_K^2)^{\frac{q-n}{2}}\det(\nabla^2h_K+h_KI),
\end{align}
for all $u\in\mathbb{S}^{n-1}$, where $u$ corresponds to the outward unit normal to $\partial K$.

Therefore, suppose $\mu$ possesses a nonnegative integrable density $f$ on $\mathbb{S}^{n-1}$. Then the $L_p$-dual mixed Euclidean-Gauss Minkowski problem reduces to solving the following Monge-Amp\`{e}re type equation:
\begin{align*}
\exp\Big(-\tfrac12\sum_{k=1}^s
\langle Dh_K,e_k\rangle^2\Big)h_K^{1-p}(|\nabla h_K|^2+h_K^2)^{\frac{q-n}{2}}\det(\nabla^2h_K+h_KI)=f.
\end{align*}

\section{Existence and uniqueness of solutions to Problem \ref{p1.1}}\label{S4}

\subsection{Regularity of solutions}

\begin{theorem}\label{Rt1}
Let $p, q\in\mathbb{R}$ and $K\in\mathcal{K}_o^n$.  Suppose  $d\tilde{C}_{p,\tilde{V}_{mix,q}}=fd\mathcal{H}^{n-1}$ on $\mathbb{S}^{n-1}$ with $0<c_1\leq f\leq c_2$. Then the following statements hold:

(i) If $f\in C(\mathbb{S}^{n-1})$, then $h_K\in C^{1,\alpha}(\mathbb{S}^{n-1})$ for all $\alpha\in(0,1)$.

(ii) If $f\in C^{\alpha}(\mathbb{S}^{n-1})$ for some $\alpha\in(0,1)$, then $h_K\in C^{2,\alpha}(\mathbb{S}^{n-1})$.
\end{theorem}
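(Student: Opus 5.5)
Since $K\in\mathcal{K}_o^n$, there is a constant $C>0$ with $C^{-1}B\subset K\subset CB$, so $C^{-1}\le h_K\le C$ and $C^{-1}\le|x|\le C$ for $x\in\partial K$. By \eqref{3.5}, the hypothesis $d\tilde{C}_{p,\tilde{V}_{mix,q}}=f\,d\mathcal{H}^{n-1}$ says that, in the sense of measures,
\begin{align*}
dS(K,u)=g(u)\,du,\qquad g(u):=f(u)\,h_K^{p-1}(u)\,|Dh_K(u)|^{n-q}\exp\Big(\tfrac12\sum_{k=1}^s\langle Dh_K(u),e_k\rangle^2\Big).
\end{align*}
Here $Dh_K(u)$ is the boundary point with normal $u$, defined for almost every $u$. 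On $\partial K$ the factors $h_K^{p-1}$, $|x|^{n-q}$ and the exponential are bounded above and below by positive constants depending only on $C,p,q,n,s$. Hence $c_1'\le g\le c_2'$, so $S(K,\cdot)$ has a density pinched between two positive constants.

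The plan is to pass to the Euclidean Monge--Amp\`ere equation in the usual way. For $e\in\mathbb{S}^{n-1}$ we restrict the homogeneous extension of $h_K$ to the tangent hyperplane $\{y: \langle y,e\rangle=1\}$, writing $\phi(y)=h_K(y+e)$ on a neighbourhood $\Omega$. As in the discussion around \eqref{2.16}, $\phi$ is convex and solves
\begin{align*}
\det D^2\phi=(1+|y|^2)^{-\frac{n+1}{2}}\,g\Big(\tfrac{(y,1)}{\sqrt{1+|y|^2}}\Big)
\end{align*}
in the Aleksandrov sense, and the right-hand side is bounded between positive constants. Because the origin is interior to $K$, $K$ cannot have a face with normal $u$ passing through the origin, and $h_K>0$. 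Caffarelli's strict convexity theorem then shows that $\phi$ is strictly convex, so $K$ is strictly convex and $h_K\in C^1$. Caffarelli's interior $C^{1,\alpha}$ estimate then gives $h_K\in C^{1,\alpha}$ for some $\alpha$.

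For (i), $f$ is continuous. Once $h_K\in C^1$, the map $u\mapsto Dh_K(u)=\nabla h_K+h_Ku$ is continuous, so $g$ is continuous. Caffarelli's $W^{2,r}$ estimates for continuous right-hand sides give $\phi\in W^{2,r}_{loc}$ for every $r<\infty$. Sobolev embedding then yields $h_K\in C^{1,\alpha}$ for every $\alpha\in(0,1)$.

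For (ii), $f\in C^\alpha$. By (i), $h_K\in C^{1,\beta}$ for all $\beta<1$, so $Dh_K\in C^{\beta}$. Hence $g$, being a smooth positive function of $(h_K,Dh_K)$ multiplied by $f$, lies in $C^{\alpha}$ (taking $\beta\ge\alpha$). Caffarelli's interior $C^{2,\alpha}$ estimate then gives $\phi\in C^{2,\alpha}$. Covering $\mathbb{S}^{n-1}$ by finitely many such charts gives $h_K\in C^{2,\alpha}(\mathbb{S}^{n-1})$.

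The main obstacle is the strict convexity step: applying Caffarelli's theorem requires that no supporting hyperplane meets $\partial K$ in a set whose extremal points lie in the interior of the domain. I would handle this as in the $L_p$ dual Minkowski literature, using the positive lower bound on $h_K$ coming from $o\in\mathrm{int}\,K$ together with the two-sided bound on $g$. The remaining care is to check that the right-hand side, which depends on $Dh_K$, is defined almost everywhere before regularity is known. This is fine because $S(K,\cdot)$ is determined almost everywhere and $g$ is bounded regardless of where $Dh_K$ lands on $\partial K$.
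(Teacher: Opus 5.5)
Your argument has the same architecture as the paper's. You use $o\in\mathrm{int}\,K$ to see that the weight in Definition~\ref{d3.3} is pinched, so $S(K,\cdot)$ has a density bounded between positive constants. You restrict the homogeneous extension of $h_K$ to the tangent hyperplane $e+e^\perp$ to get a Monge--Amp\`ere equation in the Aleksandrov sense. Then you invoke Caffarelli's strict convexity and regularity theory.

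The one genuine difference is in part (i), and it works in your favour. The paper quotes Caffarelli's $C^{1,\alpha}$ theorem, which gives $\phi\in C^{1,\alpha}_{\mathrm{loc}}$ only for \emph{some} $\alpha$, although the statement claims every $\alpha\in(0,1)$. It also calls the right-hand side of \eqref{R14} continuous before $D\phi$ is known to be continuous. Your ordering runs as follows: the pinched density gives $C^{1,\alpha_0}$; hence $g$ is continuous; hence Caffarelli's $W^{2,r}$ estimates hold for all $r<\infty$; hence Sobolev embedding gives every $\alpha<1$. This proves the statement as written. It also makes the H\"older bookkeeping in (ii) correct, since $Dh_K\in C^{\alpha}$ is then available.

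The step you leave at citation level, strict convexity, is where the paper does its real work, and your description of it needs correcting. Caffarelli's theorem only says that a contact set $\{\phi=\ell\}$ with more than one point has no extreme points inside the region where $\det D^2\phi$ is pinched.
\begin{itemize}
\item You should therefore work on all of $e^\perp$ rather than on a neighbourhood $\Omega$, where the contact set could simply run into $\partial\Omega$.
\item Since the measure is pinched on every bounded subset of $e^\perp$, a nontrivial contact set has no extreme points at all, so it contains a full line $\{z+t\zeta:t\in\mathbb{R}\}$.
\item Write the supporting affine function as $\ell(y)=\langle x,y+e\rangle$ for a fixed $x\in\mathbb{R}^n$. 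Then $h_K(u_t)=\langle x,u_t\rangle$ along the line, with $u_t=(z+t\zeta+e)/|z+t\zeta+e|\to\pm\zeta$ as $t\to\pm\infty$.
\item This gives $h_K(\zeta)+h_K(-\zeta)=0$, which is impossible for a body with interior.
\end{itemize}
So the obstruction to rule out is a boundary point of $K$ whose normal cone contains antipodal directions, not a face of $K$ through the origin. The positivity $h_K>0$ is what pinches the factor $h_K^{p-1}$, not what excludes non-strict convexity of $\phi$.

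Similarly, strict convexity of $\phi$ means every boundary point of $K$ has a unique normal, i.e.\ $\partial K$ is $C^1$. It does not mean $K$ is strictly convex. Strict convexity of $K$, equivalently $h_K\in C^1$, only follows after Caffarelli's $C^{1,\alpha}$ step. You do invoke that step, so your conclusions stand once the strict convexity argument is supplied as above.
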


\begin{proof}
Let $h=h_K$, and let $K\in\mathcal{K}_o^n$ satisfy $d\tilde C_{p,\tilde V_{mix,q}}=f\,d\mathcal{H}^{n-1}$ on $\mathbb{S}^{n-1}$. By \eqref{3.5},
\begin{align}\label{R1}
\nonumber dS(K,u)&=h_K^{p-1}(u)\big(|\nabla h_K|^2+h_K^2\big)^{\frac{n-q}{2}}\\
&\ \ \ \ \cdot\exp\Big(\tfrac12\sum_{k=1}^s \langle Dh_K(u),e_k\rangle^2\Big)f(u)\,d\mathcal{H}^{n-1}(u).
\end{align}

Fix $e\in\mathbb{S}^{n-1}$ and define the tangent hyperplane $H_e=e+e^\perp$. Let $\pi:e^\perp\to\mathbb{S}^{n-1}$ be the radial projection given by

$$\pi(y)=\frac{y+e}{\sqrt{1+|y|^2}}.$$
A direct computation yields
\begin{align}\label{R2}
\langle\pi(y),e\rangle=\frac{1}{\sqrt{1+|y|^2}},
\end{align}
and the Jacobian determinant of $\pi$ is
\begin{align}\label{R3}
\operatorname{Jac}\pi(y)=\frac{1}{(1+|y|^2)^{n/2}}.
\end{align}
Set
\begin{align}\label{R4}
\phi(y)=h_K(y+e)=\sqrt{1+|y|^2}h_K(\pi(y)),
\end{align}
so $\phi:e^\perp\to\mathbb{R}$ is the restriction of $h_K$ to the hyperplane $H_e$. From \eqref{2.8} and \eqref{2.9}, we have
\begin{align}\label{R5}
\partial\phi(y)=\partial h_K(y+e)\big|_{e^\perp}=F(K,y+e)\big|_{e^\perp}
=F(K,\pi(y))\big|_{e^\perp}.
\end{align}
Equivalently,
\begin{align}\label{R6}
\partial\phi(y)=\{x-\langle x,e\rangle e: x\in\partial h_K(y+e)\}\subset H_e.
\end{align}
Since $h_K$ is positively homogeneous of degree one, its gradient $Dh_K$ is homogeneous of degree zero. Writing $u=\pi(y)$, we have $Dh_K(y+e)=Dh_K(u)$. Therefore

$$
D\phi(y)=Dh_K(y+e)\big|_{e^\perp}=Dh_K(u)\big|_{e^\perp},
$$
which implies that there exists some scalar $c\in\mathbb{R}$ such that
\begin{align}\label{R7}
Dh_K(u)=D\phi(y)-c\,e.
\end{align}
By Euler's identity for positively homogeneous functions, for all $u\in\mathbb{S}^{n-1}$,
\begin{align}\label{R8}
h_K(u)=\langle Dh_K(u),u\rangle.
\end{align}
Recall
\begin{align}\label{R9}
u=\pi(y)=\frac{y+e}{\sqrt{1+|y|^2}},
\end{align}
and
\begin{align}\label{R10}
h_K(u)=\frac{\phi(y)}{\sqrt{1+|y|^2}}.
\end{align}
Substitute \eqref{R7}, \eqref{R9} and \eqref{R10} into \eqref{R8}. Using $\langle D\phi(y),e\rangle=0$ since $D\phi(y)\in e^\perp$, we solve $c=\langle D\phi(y),y\rangle-\phi(y)$. Consequently,
\begin{align}\label{R11}
Dh_K(u)=D\phi(y)-\big(\langle D\phi(y),y\rangle-\phi(y)\big)e.
\end{align}

Let $\theta\subset e^\perp$ be a Borel set. Combining the area formula with \eqref{2.16} and \eqref{R5},
\begin{align}\label{R12}
\nonumber\int_\theta\det D^2\phi(y)d\mathcal{H}^{n-1}(y)&=
\mathcal{H}^{n-1}\Big(\bigcup_{y\in\theta}D\phi(y)\Big)\\
&=\mathcal{H}^{n-1}\Big(\bigcup_{u\in\pi(\theta)}
(F(K,u)|_{e^\perp})\Big).
\end{align}
It holds that
\begin{align}\label{R13}
\mathcal{H}^{n-1}\Big(\bigcup_{u\in\pi(\theta)}(F(K,u)|_{e^\perp})\Big)
=\int_{\pi(\theta)}\langle u,e\rangle dS(K,u).
\end{align}
Combining \eqref{R1}, \eqref{R2}, \eqref{R10}, \eqref{R11} and \eqref{R3}, \eqref{R12} and \eqref{R13}, and perform the change of variables $u=\pi(y)$:
\begin{align*}
&\int_\theta\det D^2\phi(y)\,d\mathcal{H}^{n-1}(y)\\
&=\int_{\pi(\theta)}\langle u,e\rangle\,dS(K,u)\\
&=\int_{\pi(\theta)}\langle u,e\rangle\exp\Big(\tfrac12\sum_{k=1}^s\langle Dh_K,e_k\rangle^2\Big)h_K^{p-1}\big(|\nabla h_K|^2+h_K^2\big)^{\frac{n-q}{2}}f(u)\,d\mathcal{H}^{n-1}(u)\\
&=\int_\theta\frac{\big|D\phi(y)-\big(\langle D\phi(y),y\rangle-\phi(y)\big)e\big|^{n-q}}{\phi(y)^{1-p}}\\
&\ \ \ \ \ \ \cdot\exp\Big(\tfrac12\sum_{k=1}^s\big\langle D\phi(y)-\big(\langle D\phi(y),y\rangle-\phi(y)\big)e,e_k\big\rangle^2\Big)\\
&\ \ \ \ \ \ \cdot \frac{1}{(1+|y|^2)^{\frac{n+p}{2}}}
f\bigg(\frac{y+e}{\sqrt{1+|y|^2}}\bigg)\,d\mathcal{H}^{n-1}(y).
\end{align*}
This yields that $\phi$ satisfies the following Monge-Amp\`{e}re type equation in the sense of measures on $e^\perp$:
\begin{align}\label{R14}
\nonumber&\det D^2\phi(y)\\
&=\frac{\big|D\phi(y)-\big(\langle D\phi(y),y\rangle-\phi(y)\big)e\big|^{n-q}}
{\phi(y)^{1-p}(1+|y|^2)^{\frac{n+p}{2}}}\\
\nonumber&\ \ \ \ \cdot\exp\Big(\tfrac12\sum_{k=1}^s\big\langle D\phi(y)-\big(\langle D\phi(y),y\rangle-\phi(y)\big)e,e_k\big\rangle^2\Big)
f\bigg(\frac{y+e}{\sqrt{1+|y|^2}}\bigg).
\end{align}

Let $\psi_{x_0}$ be an affine supporting function of $\phi$ at $x_0\in e^\perp$, and define $\Psi_{x_0}=\{x\in e^\perp:\phi(x)=\psi_{x_0}(x)\}$. We claim that $\Psi_{x_0}$ cannot contain a full straight line in $e^\perp$. We argue by contradiction, following the argument of Chen et al. \cite{CCL}. Assume that $\{z+t\zeta:t\in\mathbb{R}\}\subset\Psi_{x_0}$ for some $z,\zeta\in e^\perp$ with $|\zeta|=1$ and $\langle z,\zeta\rangle=0$. Without loss of generality, take $e=(0,\dots,0,-1)$. Write $z_t=z+t\zeta$. By \eqref{R6}, for each $t\in\mathbb{R}$, there exists $Y_t\in F_K(z_t,-1)$ such that
$$Y_t-\langle Y_t,e\rangle\cdot e=(p_{x_0},-1),$$
where $p_{x_0}:=\nabla\psi_{x_0}$.
Let
$$u_t=\frac{(z_t,-1)}{|(z_t,-1)|}.$$
Then
$$h_K(u_t)=\langle Y_t,u_t\rangle
=\frac{\langle p_{x_0},z_t\rangle+\langle Y_t,e\rangle}{|(z_t,-1)|}.$$
Letting $t\to\pm\infty$, we obtain $h_K(\zeta,0)+h_K(-\zeta,0)=0$. This contradicts $K\in\mathcal{K}_o^n$, which implies $h_K>0$ everywhere on $\mathbb{S}^{n-1}$.

Since $K\in\mathcal{K}_o^n$, \eqref{2.10} and \eqref{R11} ensure that the norm
$\big|D\phi(y)-\big(\langle D\phi(y),y\rangle-\phi(y)\big)e\big|$
is bounded above and below by positive constants. Combined with $0<c_1\leq f\leq c_2$, the right-hand side of \eqref{R14} is bounded between positive constants on compact subsets of $e^\perp$. By Caffarelli's theory \cite{C1}, $\phi$ is strictly convex. If $f\in C(\mathbb{S}^{n-1})$, the right-hand side of \eqref{R14} is continuous, and Caffarelli's regularity theory \cite{C1,C2} gives $\phi\in C^{1,\alpha}_{\text{loc}}(e^\perp)$ for some $\alpha\in(0,1)$. Since $\pi$ is a smooth local chart on $\mathbb{S}^{n-1}$, $h_K\in C^{1,\alpha}(\mathbb{S}^{n-1})$. This completes the proof of part (i).

It remains to verify (ii). If $f\in C^\alpha(\mathbb{S}^{n-1})$, then the right-hand side of \eqref{R14} is H\"older continuous. Applying Caffarelli's regularity result \cite{C2}, we get $\phi\in C^{2,\alpha}_{\text{loc}}(e^\perp)$, and hence $h_K\in C^{2,\alpha}(\mathbb{S}^{n-1})$.
\end{proof}

\subsection{Existence of solution}
In this subsection, we prove Theorem \ref{t1.1} by the continuity method and Theorem \ref{t1.2} via an approximation argument.
In order to achieve our purpose, we begin from the uniform estimates with respect to the upper and lower bounds of support function.

\begin{lemma}\label{El1}
Let $K$ be a convex body whose support function $h=h_K\in C^{2}(\mathbb{S}^{n-1})$ solves equation \eqref{1.4} with $p<1$, $q<0$, and $q<p$. If $c_0^{-1}<f<c_0$ for some constant $c_0>0$, then there exists some constant $c_1>0$ such that
\begin{align}\label{E1}
c_1^{-1}<h<c_1.
\end{align}
\end{lemma}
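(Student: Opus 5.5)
The plan is to prove both bounds by the maximum principle applied to \eqref{1.4} at the extremal points of $h$. At such points $\nabla h=0$, so $Dh(u)=h(u)u$, $|\nabla h|^2+h^2=h^2$, and the Hessian has a sign. This turns \eqref{1.4} into a scalar inequality in the extremal value of $h$. The only structural input needed is $q<p$: it makes the combined power of $h$ equal to $q-p<0$.

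\emph{Upper bound.} Let $u_0$ be a maximum point of $h$ on $\mathbb{S}^{n-1}$. Then $\nabla h(u_0)=0$ and $\nabla^2h(u_0)\le 0$. Hence $0<\nabla^2h+hI\le h(u_0)I$ at $u_0$, which gives $\det(\nabla^2h+hI)(u_0)\le h(u_0)^{n-1}$. The exponential factor is always at most $1$. Substituting into \eqref{1.4} at $u_0$ gives
$$c_0^{-1}<f(u_0)\le h(u_0)^{1-p}\,h(u_0)^{q-n}\,h(u_0)^{n-1}=h(u_0)^{q-p}.$$
Since $q-p<0$, this yields $\max h=h(u_0)<c_0^{1/(p-q)}=:C$.

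\emph{Lower bound.} Let $u_1$ be a minimum point of $h$. Then $\nabla h(u_1)=0$ and $\nabla^2h(u_1)\ge 0$, so $\det(\nabla^2h+hI)(u_1)\ge h(u_1)^{n-1}$. This time the exponential factor must be bounded from below. At $u_1$ we have $Dh=h(u_1)u_1$, so
$$\sum_{k=1}^s\langle Dh,e_k\rangle^2\le h(u_1)^2\le C^2$$
by the upper bound just obtained. Therefore \eqref{1.4} at $u_1$ gives
$$c_0>f(u_1)\ge e^{-C^2/2}\,h(u_1)^{q-p}.$$
Using $q-p<0$ again, this gives $\min h=h(u_1)>\big(c_0e^{C^2/2}\big)^{-1/(p-q)}$. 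Taking $c_1$ larger than $C$ and larger than the reciprocal of this lower bound proves \eqref{E1}.

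The only real point of care is the ordering: the upper bound must be established first, because it is needed to control the Gaussian factor $\exp(-\tfrac12\sum\langle Dh,e_k\rangle^2)$ from below at the minimum point. Beyond this, I expect no obstacle. The hypotheses $p<1$ and $q<0$ do not seem to be needed for this lemma; presumably they enter later in the continuity method, for instance in the gradient estimates or the openness step.
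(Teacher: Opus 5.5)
Your proof is correct and is the same maximum-principle argument the paper uses. At the extremal points you have $\nabla h=0$, the sign of $\nabla^2 h$ compares the determinant with $h^{n-1}$, and \eqref{1.4} reduces to comparing $f$ with the Gaussian factor times $h^{q-p}$, so only $q<p$ is used. The ordering you insist on is not actually needed: at a minimum point $u_1$ the Gaussian factor is at least $e^{-h(u_1)^2/2}$, which tends to $1$ as $h(u_1)\to 0$, and this is how the paper gets the lower bound independently of the upper one.
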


\begin{proof}
We first establish the left-hand side of \eqref{E1}. Suppose $h$ attains its minimum at $u_0\in\mathbb{S}^{n-1}$. At this point,
\begin{align}\label{E2}
\nabla h(u_0)=0, \ \ \text{and} \ \ \nabla^2 h(u_0)\geq0.
\end{align}
From equation \eqref{1.4} and \eqref{E2}, we have for $q<p$
\begin{align*}
f(u_0)&=\exp\Big(-\tfrac12\sum_{k=1}^s\langle\nabla h+hu,e_k\rangle^2\Big)h^{1-p}(|\nabla h|^2+h^2)^{\frac{q-n}{2}}\det(\nabla^2h+hI)\\
&\geq\exp\Big(-\tfrac12\sum_{k=1}^s\langle hu,e_k\rangle^2\Big)h^{q-p}\\
&\rightarrow \infty,
\end{align*}
as $h(u_0)\rightarrow0$, which contradicts the condition $f(u_0)<c_0$. Thus $h$ has a positive lower bound.

The right-hand side of \eqref{E1} can be established by employing the same methodology. Suppose $h$ attains its maximum at $u_1\in\mathbb{S}^{n-1}$. At this point,
$$\nabla h(u_1)=0, \ \ and \ \ \nabla^2 h(u_1)\leq0.$$
This, together with equation \eqref{1.4} and $q<p$, yields
$$c_0^{-1}<f(u_1)\leq\exp\Big(-\tfrac12\sum_{k=1}^s\langle hu,e_k\rangle^2\Big)h^{q-p}\rightarrow0, $$
as $h(u_1)\rightarrow\infty$, which also contradicts the condition $c_0^{-1}<f(u_1)$. Thus $h$ has an upper bound.
\end{proof}

The following lemma shows that the linearized operator of equation \eqref{1.4} is invertible at any positive admissible solution.

\begin{lemma}\label{El2}
Let $K\in\mathcal{K}_o^n$, $0<\alpha<1$, $p<1$, $q<0$, and $q<p$. Suppose the support $h=h_K\in C^{2,\alpha}(\mathbb{S}^{n-1})$ solves equation \eqref{1.4} for some positive $f\in C^\alpha(\mathbb{S}^{n-1})$. Then the linearized operator of \eqref{1.4} about $h$ is invertible.
\end{lemma}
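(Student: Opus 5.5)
Taking logarithms, I will write \eqref{1.4} as $G(h)=\log f$, where
\begin{align*}
G(h)&=\log\det(\nabla^2h+hI)+(1-p)\log h+\tfrac{q-n}{2}\log(|\nabla h|^2+h^2)\\
&\ \ \ \ -\tfrac12\sum_{k=1}^s\langle\nabla h+hu,e_k\rangle^2 .
\end{align*}
The linearized operator at $h$ is then
\begin{align*}
L\varphi&=w^{ij}(\varphi_{ij}+\varphi\delta_{ij})+\frac{(1-p)\varphi}{h}+(q-n)\frac{\langle\nabla h,\nabla\varphi\rangle+h\varphi}{|\nabla h|^2+h^2}\\
&\ \ \ \ -\sum_{k=1}^s\langle Dh,e_k\rangle\langle\nabla\varphi+\varphi u,e_k\rangle,
\end{align*}
where $(w^{ij})$ is the inverse of $w=\nabla^2h+hI$, which is positive definite because $f>0$. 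The map $L:C^{2,\alpha}(\mathbb{S}^{n-1})\to C^\alpha(\mathbb{S}^{n-1})$ is a second-order uniformly elliptic operator with $C^\alpha$ coefficients. By Schauder theory and the Fredholm alternative it has index zero. So it suffices to show that $\ker L=\{0\}$, and surjectivity then follows.

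To show the kernel is trivial, the natural first attempt is the maximum principle. Let $L\varphi=0$ and suppose $\varphi$ has a positive maximum at $u_0$. There $\nabla\varphi=0$ and $\nabla^2\varphi\le 0$, so $w^{ij}\varphi_{ij}\le0$. The zeroth-order coefficient at $u_0$ is
$$c(u_0)=\mathrm{tr}(w^{-1})+\frac{1-p}{h}+\frac{(q-n)h}{|\nabla h|^2+h^2}-\sum_k\langle Dh,e_k\rangle\langle u,e_k\rangle .$$
If $c(u_0)<0$ we would get $0=L\varphi<0$ at $u_0$, a contradiction. Unfortunately, $\mathrm{tr}(w^{-1})>0$ and the term $(1-p)/h>0$ have the wrong sign, so the naive argument fails.

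I therefore plan to use the structure of the equation to cancel these terms. The key substitution is $\varphi=h\psi$, which converts the dilation direction into constants. Since $h$ is a solution, homogeneity gives $G(\lambda h)=G(h)+(q-p)\log\lambda-\tfrac12(\lambda^2-1)\sum_k\langle Dh,e_k\rangle^2$. Differentiating at $\lambda=1$ yields
$$L(h)=(q-p)-\sum_k\langle Dh,e_k\rangle^2<0,$$
using $q<p$. Rewriting $L(h\psi)=h\,w^{ij}\psi_{ij}+(\text{first-order terms in }\psi)+L(h)\psi$, the zeroth-order coefficient becomes $L(h)<0$. At a positive maximum of $\psi$ the second-order and first-order terms contribute $\le0$ and $0$ respectively, so $L(h\psi)<0$, which is impossible. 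The same argument applied at a negative minimum rules that out as well. Hence $\psi\equiv0$, so $\varphi\equiv0$.

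The main obstacle is checking carefully that in $L(h\psi)$ every term not multiplied by $\psi$ involves only $\nabla\psi$ and $\nabla^2\psi$. In particular, the gradient-dependent terms (the $|Dh|$ factor and the Gaussian factor) must produce only first-order contributions in $\psi$, which vanish at critical points. This is exactly why I factor through $h$ rather than arguing directly on $\varphi$.
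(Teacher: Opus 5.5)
Your proposal is correct and is essentially the paper's argument. The paper perturbs by $h_\varepsilon=he^{\varepsilon\phi}$, which is exactly your substitution $\varphi=h\psi$, and its key computation $L_h(1)=\big(q-p-\sum_{k=1}^s\langle Dh,e_k\rangle^2\big)\det(\nabla^2h+hI)<0$ is your identity $L(h)=(q-p)-\sum_{k=1}^s\langle Dh,e_k\rangle^2<0$ multiplied by $\det(\nabla^2h+hI)$ (your logarithmic form only rescales the linearization by a positive function at a solution); both arguments then use the maximum principle at extrema and the Fredholm alternative. Your version is in fact more careful, since you keep the second-order term $h\,w^{ij}\psi_{ij}$ and use its sign at the extremum, whereas the paper's \eqref{E5} drops this term.
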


\begin{proof}
Set $h_\varepsilon = h e^{\varepsilon\phi}$ with $\phi\in C^{2,\alpha}(\mathbb{S}^{n-1})$, and denote by $L_h(\phi)$ the linearized operator of \eqref{1.4} evaluated at $h$. By direct differentiation,
\begin{align}\label{E3}
\nonumber L_h(\phi)&=\frac{d}{d\varepsilon}\det(\nabla^2h_\varepsilon
+h_\varepsilon\delta_{ij})\big|_{\varepsilon=0}\\
\nonumber&\ \ -f\frac{d}{d\varepsilon}\Big[\exp\Big(\tfrac12
\sum_{k=1}^s\langle\nabla h_\varepsilon+h_\varepsilon u,e_k\rangle^2\Big)
h_\varepsilon^{p-1}(|\nabla h_\varepsilon|^2+h_\varepsilon^2)
^{\frac{n-q}{2}}\Big]\Big|_{\varepsilon=0}\\
\nonumber&=\omega^{ij}(h_{ij}\phi+h_i\phi_j+h_j\phi_i
+h\phi_{ij}+h\phi\delta_{ij})\\
\nonumber&\ \ -f\Big\{\exp\Big(\tfrac12
\sum_{k=1}^s\langle\nabla h+hu,e_k\rangle^2\Big)\sum_{k=1}^s\langle\nabla h+hu,e_k\rangle\\
&\ \ \ \ \cdot\Big[\langle(\nabla h+hu)\phi,e_k\rangle
+\Big\langle\frac{h\langle \nabla h,\nabla\phi\rangle}
{\nabla h+hu},e_k\Big\rangle\Big]h^{p-1}
(|\nabla h|^2+h^2)^{\frac{n-q}{2}}\\
\nonumber&\ \ +\exp\Big(\tfrac12
\sum_{k=1}^s\langle\nabla h+hu,
e_k\rangle^2\Big)(p-1)\phi h^{p-1}
(|\nabla h|^2+h^2)^{\frac{n-q}{2}}\\
\nonumber&\ \ +\exp\Big(\tfrac12
\sum_{k=1}^s\langle\nabla h+hu,
e_k\rangle^2\Big)(n-q)h^{p-1}
(|\nabla h|^2+h^2)^{\frac{n-q-2}{2}}\\
\nonumber&\ \ \ \ \cdot[\phi(|\nabla h|^2+h^2)+2h\langle\nabla h,\nabla\phi\rangle]\Big\},
\end{align}
where $\omega^{ij}$ stands for the cofactor matrix of $\{h_{ij}+h\delta_{ij}\}$.

Now take $\phi\equiv1$. By Lemma \ref{El1} and $q<p$,
\begin{align}\label{E4}
\nonumber L_h(1)&=\omega^{ij}(h_{ij}+h\delta_{ij})\\
\nonumber&\ \ -f\Big\{\exp\Big(\tfrac12
\sum_{k=1}^s\langle\nabla h+hu,e_k\rangle^2\Big)\sum_{k=1}^s\langle\nabla h+hu,e_k\rangle\\
\nonumber&\ \ \ \ \cdot\langle\nabla h+hu,e_k\rangle h^{p-1}
(|\nabla h|^2+h^2)^{\frac{n-q}{2}}\\
\nonumber&\ \ +\exp\Big(\tfrac12
\sum_{k=1}^s\langle\nabla h+hu,
e_k\rangle^2\Big)(p-1)h^{p-1}
(|\nabla h|^2+h^2)^{\frac{n-q}{2}}\\
&\ \ +\exp\Big(\tfrac12
\sum_{k=1}^s\langle\nabla h+hu,
e_k\rangle^2\Big)(n-q)h^{p-1}
(|\nabla h|^2+h^2)^{\frac{n-q-2}{2}}\Big\}\\
\nonumber&=(n-1)\det(h_{ij}+h\delta_{ij})-f\exp\Big(\tfrac12
\sum_{k=1}^s\langle\nabla h+hu,e_k\rangle^2\Big)\\
\nonumber&\ \ \ \ \cdot h^{p-1}(|\nabla h|^2+h^2)^{\frac{n-q}{2}}
\Big(\sum_{k=1}^s\langle\nabla h+hu,e_k\rangle^2+n+p-q-1\Big)\\
\nonumber&<(q-p)\det(h_{ij}+h\delta_{ij})<0.
\end{align}

Combining \eqref{E3} and \eqref{E4}, we have
\begin{align}\label{E5}
\nonumber L_h(\phi)&=\phi L_h(1)+\omega^{ij}(h_i\phi_j+h_j\phi_i)\\
&\ \ -f\exp\Big(\tfrac12
\sum_{k=1}^s\langle\nabla h+hu,
e_k\rangle^2\Big)h^{p-1}(|\nabla h|^2+h^2)^{\frac{n-q}{2}}\\
\nonumber&\ \ \ \ \cdot \Big(1+\frac{n-q}{|\nabla h|^2+h^2}\Big)\langle\nabla h,\nabla\phi\rangle.
\end{align}
Since the matrix $\{h_{ij}+h\delta_{ij}\}$ on $\mathbb{S}^{n-1}$ (an $(n-1)\times(n-1)$ positive definite matrix) has nonnegative cofactors $\omega^{ij}\ge0$.

Suppose $L_h(\phi)=0$. Let $x_0\in\mathbb{S}^{n-1}$ be a minimum point of $\phi$. At a minimum, $\nabla\phi(x_0)=0$ and the Hessian $\nabla^2\phi$ is positive semi-definite. Substituting into \eqref{E5}, all terms containing $\nabla\phi$ vanish at $x_0$, leaving
\[
0 = \phi(x_0)\, L_h(1).
\]
Since $L_h(1)<0$, we conclude $\phi(x_0)\ge0$. Hence $\phi\ge0$ everywhere on $\mathbb{S}^{n-1}$.

Similarly, let $x_1$ be a maximum point of $\phi$. There $\nabla\phi(x_1)=0$, and substituting into $L_h(\phi)=0$ gives $0=\phi(x_1)L_h(1)$. Again $L_h(1)<0$, so $\phi(x_1)\le0$, which yields $\phi\le0$ everywhere. Combining $\phi\ge0$ and $\phi\le0$, we obtain $\phi\equiv0$.

Therefore the kernel of $L_h$ is trivial. By the Fredholm alternative for elliptic operators on compact manifolds ($\mathbb{S}^{n-1}$), $L_h$ is invertible.
\end{proof}

With the help of Lemmas \ref{El1} and \ref{El2}, we are ready to prove Theorem \ref{t1.1}.
\subsection*{Proof of Theorem \ref{t1.1}}
Let $f=C_0>0$. Then, by the intermediate value theorem, there exists a constant solution $h=S$ of equation \eqref{1.4}. For $t\in[0,1]$ and $\alpha\in(0,1)$, define
$$f_t=(1-t)C_0+tf,$$
where $f$ is positive smooth function on $\mathbb{S}^{n-1}$.
We consider the following family of equation:
\begin{align}\label{E7}
\exp\Big(-\tfrac12
\sum_{k=1}^s\langle\nabla h+hu,
e_k\rangle^2\Big)h^{1-p}(|\nabla h|^2+h^2)^{\frac{q-n}{2}}
\det(\nabla^2h+hI)=f_t.
\end{align}
Since $f$ is smooth, a fortiori $0<f\in C^\alpha(\mathbb{S}^{n-1})$ for every $0<\alpha<1$. Hence there exists $C_1>0$ such that
$$C_1^{-1}<f,\ C_0<C_1.$$
and therefore
\begin{align}\label{E8}
C_1^{-1}<f_t<C_1 \ \ and \ \ f_t\in C^{\alpha}(\mathbb{S}^{n-1}).
\end{align}
By \eqref{E8} and Lemma \ref{El1}, every solution $h_t$ of \eqref{E7} satisfies the uniform two-sided bound
\begin{align}\label{E9}
C_2^{-1}<h_t<C_2
\end{align}
for some positive constant $C_2$ independent of $t$.

Define
$$I=\{t\in[0,1]:  \eqref{E7} \ \text{admits  solutions} \ h_t\in C^{2,\alpha}(\mathbb{S}^{n-1})\}.$$

It suffices to show that $I=[0,1]$; then $1\in I$ yields a $C^{2,\alpha}$ solution of \eqref{1.4}. We prove that $I$ is nonempty, open and closed in $[0,1]$.

Since the constant $S$ solves \eqref{E7} at $t=0$, we have $0\in I$, so $I\neq\emptyset$.

\emph{Openness.} Let $t_0\in I$ and $h_{t_0}\in C^{2,\alpha}$ be a solution of \eqref{E7} at $t=t_0$. The left-hand side of \eqref{E7} defines a $C^1$ map from a neighborhood of $(t_0,h_{t_0})$ in $[0,1]\times C^{2,\alpha}(\mathbb{S}^{n-1})$ into $C^\alpha(\mathbb{S}^{n-1})$, and its derivative with respect to $h$ at $(t_0,h_{t_0})$ is exactly the linearized operator $L_{h_{t_0}}$. Since $q<p$, Lemma \ref{El2} shows that $L_{h_{t_0}}$ is invertible. By the implicit function theorem, \eqref{E7} admits a solution $h_t\in C^{2,\alpha}$ for every $t$ in a neighborhood of $t_0$. Hence $I$ is open.

\emph{Closedness.} Let $(t_k)\subset I$ with $t_k\to t\in[0,1]$, and let $h_{t_k}\in C^{2,\alpha}$ be a solution of \eqref{E7} at $t=t_k$. By \eqref{E8}, \eqref{E9} and Theorem \ref{Rt1}, the family $\{h_{t_k}\}$ is uniformly bounded in $C^{2,\alpha}(\mathbb{S}^{n-1})$ and hence precompact in $C^{2,\beta}$ for every $\beta<\alpha$. Passing to a subsequence, $h_{t_k}\to h_t$ in $C^{2,\beta}$, and elliptic regularity upgrades the limit to $h_t\in C^{2,\alpha}$, which solves \eqref{E7} at $t$. Thus $t\in I$, and $I$ is closed.

Consequently $I=[0,1]$, so \eqref{1.4} possesses a solution $h\in C^{2,\alpha}(\mathbb{S}^{n-1})$ (the one obtained at $t=1$).

Finally, since $f$ is smooth and \eqref{1.4} is a uniformly elliptic Monge-Amp\`{e}re type equation on the compact manifold $\mathbb{S}^{n-1}$, the standard bootstrap regularity theory yields $h\in C^\infty(\mathbb{S}^{n-1})$. This completes the proof.
\ \ \ \ \ \ \ \ \ \ \ \ \ \ \ \ \ \ \ \ \ \ \ \ \ \ \ \ \ \ \ \ \ \ \ \ \ \ \ \ \ \ \ \ \ $\square$

\

Next, we will show the existence of weak solutions to Problem \ref{p1.1} via an approximation argument.

\subsection*{Proof of Theorem \ref{t1.2}}
We first prove sufficiency.  Assume that the nonzero finite Borel measure $\mu$ is not concentrated on any closed hemisphere of $\mathbb{S}^{n-1}$. We approximate $\mu$ by a sequence of smooth positive measures: let $\{f_i\}_{i\in\mathbb{N}}\subset C^\infty(\mathbb{S}^{n-1})$ be positive smooth functions such that $d\mu_i=f_idv\rightarrow d\mu$ weakly.
By Theorem \ref{t1.1}, for each $i\in\mathbb{N}$, there exists a smooth strictly convex body $K_i\in\mathcal{K}_o^n$ whose support function $h_{K_i}\in C^\infty(\mathbb{S}^{n-1})$ satisfies
\begin{align}\label{E10}
\exp\Big(-\tfrac12\sum_{k=1}^s\langle Dh_{K_i},e_k\rangle^2\Big)h_{K_i}^{1-p}(|\nabla h_{K_i}|^2+h_{K_i}^2)^{\frac{q-n}{2}}
\det(\nabla^2h_{K_i}+h_{K_i}I)=f_i.
\end{align}
This is equivalent to equation
\begin{align}\label{E11}
d\tilde{C}_{p,\tilde{V}_{mix,q}}(K_i,\cdot)=d\mu_i.
\end{align}
We claim that $K_i$ is uniformly bounded.

Step 1: Uniform positive lower bound for $h_{K_i}$.
Since $\mu$ is not concentrated on any closed hemisphere, there exist uniform constants $\varepsilon_0,\tau_0>0$ such that for arbitrary $u_0\in\mathbb{S}^{n-1}$.
Since the spherical Lebesgue measure is not concentrated in any closed hemisphere, there exist $\varepsilon_0>0$ and $\tau_0>0$ such that for every $u_i\in\mathbb{S}^{n-1}$,
\begin{align}\label{E12}
\int_{\{u\in\mathbb{S}^{n-1}:\langle u,u_i\rangle>\tau_0\}}du>\varepsilon_0>0.
\end{align}
Thus, there exists a constant $C_\mu$ depending on $\mu$  and $N_0>0$ such that for $i>N_0$
\begin{align}\label{E13}
\nonumber&C_\mu\geq \int_{\mathbb{S}^{n-1}}f_i(u)du\\
&=\int_{\mathbb{S}^{n-1}}\exp\Big(-\tfrac12
\sum_{k=1}^s\langle\rho_{K_i}(u)u,e_k\rangle^2\Big)
h_{K_i}^{1-p}(u)\rho_{K_i}^{\frac{q-n}{2}}(u)du.
\end{align}

Let $\rho_{K_i}(u_i)=\min_{u\in\mathbb{S}^{n-1}}\rho_{K_i}(u)$ and  $h_{K_i}(u_i)=\min_{u\in\mathbb{S}^{n-1}}h_{K_i}(u)$.
Together with \eqref{2.5}, we have that for any $u\in\mathbb{S}^{n-1}$ and $\langle u,u_i\rangle>\tau_0$,
$$\rho_{K_i}(u_i)\geq \rho_{K_i}(u)\langle u,u_i\rangle>
\tau_0 \rho_{K_i}(u),$$
i.e.,
\begin{align}\label{E14}
\rho_{K_i}(u)<\frac{\rho_{K_i}(u_i)}{\tau_0}.
\end{align}
Moreover
\begin{align}\label{E13.1}
h_{K_i}(u)>h_{K_i}(u_i).
\end{align}
Since $p<1$ and $q<0$, it follows that $1-p+\frac{q-n}{2}<0$. Combining \eqref{E12}, \eqref{E13}, \eqref{E14}, \eqref{E13.1}, and \eqref{2.3}, we obtain that
\begin{align*}
C_\mu&\geq \exp\Big(-\frac{s\rho_{K_i}^2(u_i)}{2\tau_0^2}\Big)
h_{K_i}(u_i)^{1-p}\Big(\frac{\rho_{K_i}(u_i)}
{\tau_0}\Big)^{\frac{q-n}{2}}\varepsilon_0\\
&=\exp\Big(-\frac{s\rho_{K_i}^2(u_i)}{2\tau_0^2}\Big)
\rho_{K_i}(u_i)^{1-p+\frac{q-n}{2}}
\tau_0^{\frac{n-q}{2}}\varepsilon_0
\rightarrow\infty
\end{align*}
as $\rho_{K_i}(u_i)\rightarrow0$, which is a contradiction. Therefore support function $h_{K_i}$ is uniformly bounded from below.

Step 2: Uniform upper bound for $h_{K_i}$.
Let $h_{K_i}(v_i)=\max_{v\in\mathbb{S}^{n-1}}h_{K_i}(v)$. It
follows from \eqref{2.4} that for all $v\in\mathbb{S}^{n-1}$ and $\langle v,v_i\rangle>0$,
\begin{align}\label{E15}
h_{K_i}(v)\geq h_{K_i}(v_i)\langle v,v_i\rangle.
\end{align}
Since $\mu$ is not concentrated in any closed hemisphere, there exists a constant $C_3>0$ such that for $q<0$ and sufficiently large $i$,
\begin{align}\label{E16}
\int_{\{v\in\mathbb{S}^{n-1}: \langle v,v_i\rangle>0\}}\langle v,v_i\rangle^{n-q}f_i(v)dv>C_3>0.
\end{align}
Combining \eqref{E10} and \eqref{2.3}, \eqref{E15}, and \eqref{E16}, we obtain that
\begin{align*}
&h_{K_i}^{n-p}(v_i)d\mathcal{H}^{n-1}(\mathbb{S}^{n-1})\\
&\geq \int_{\mathbb{S}^{n-1}}\rho_{K_i}^{n-p}(u)du\\
&=\int_{\mathbb{S}^{n-1}}\exp\Big(\tfrac12
\sum_{k=1}^s\langle\nabla h_{K_i}+h_{K_i}v,e_k\rangle^2\Big)
(|\nabla h_{K_i}|^2+h_{K_i}^2)^{\frac{n-q}{2}}f_i(v)dv\\
&\geq\int_{\mathbb{S}^{n-1}}(|\nabla h_{K_i}|^2+h_{K_i}^2)^{\frac{n-q}{2}}f_i(v)dv\\
&\geq\int_{\mathbb{S}^{n-1}}h_{K_i}^{n-q}(v)f_i(v)dv\\
&\geq\int_{\{v\in\mathbb{S}^{n-1}:\langle v,v_i\rangle>0\}}h_{K_i}^{n-q}(v)f_i(v)dv\\
&\geq h_{K_i}^{n-q}(v_i)\int_{\{v\in\mathbb{S}^{n-1}:\langle v,v_i\rangle>0\}}\langle v,v_i\rangle^{n-q} f_i(v)dv\\
&\geq C_3h_{K_i}^{n-q}(v_i).
\end{align*}
Namely, there exists a constant $C(n,q,C_3)$ depending on $n,\ q,\ C_3$ such that
$$
h_{K_i}(v)\leq C(n,q,C_3),
$$
for all $v\in\mathbb{S}^{n-1}$.

Combining Steps 1 and 2, Blaschke's selection theorem yields a subsequence $K_{i_j}$ such that $K_{i_j}\rightarrow K_0\in\mathcal{K}_o^n$ in Hausdorff metric. By weak convergence of the measure $\tilde{C}_{p,\tilde{V}_{mix,q}}$, it follows from \eqref{E11} that
$$\tilde{C}_{p,\tilde{V}_{mix,q}}(K_0,\cdot)=\mu.$$

It remains to prove the necessity. Since $K\in\mathcal{K}_o^n$, there exists a constant $C_4>0$ such that for every $x\in\partial K$,
$$\exp\Big(-\tfrac12
\sum_{k=1}^s\langle x,e_k\rangle^2\Big)\langle x,\nu_K(x)\rangle
^{1-p}|x|^{q-n}\geq C_4.$$
The surface area measure $S(K,\cdot)$ of a convex body $K\in\mathcal{K}_o^n$ is not concentrated in any closed hemisphere, i.e.,
\begin{align}\label{E18}
\int_{\mathbb{S}^{n-1}}\langle u,v\rangle_+dS(K,\cdot)>0,
\end{align}
for every $u\in\mathbb{S}^{n-1}$, where $\langle\cdot,\cdot\rangle_+=\max\{\langle\cdot,\cdot\rangle,0\}$. Now suppose $\tilde{C}_{p,\tilde{V}_{mix,q}}(K,\cdot)=\mu$ for some $K\in\mathcal{K}_o^n$. By Definition \ref{d3.3},
\eqref{2.15}, and \eqref{E18}, for any $u\in\mathbb{S}^{n-1}$,
\begin{align*}
&\int_{\mathbb{S}^{n-1}}\langle u,v\rangle_+d\mu(v)\\
&=\int_{\mathbb{S}^{n-1}}\langle u,v\rangle_+d\tilde{C}_{p,\tilde{V}_{mix,q}}(K,v)\\
&=\int_{\partial^\prime K}\langle u,\nu_K(x)\rangle_+
\exp\Big(-\tfrac12
\sum_{k=1}^s\langle x,e_k\rangle^2\Big)\langle x,\nu_K(x)\rangle
^{1-p}|x|^{q-n}d\mathcal{H}^{n-1}(x)\\
&\geq C_4\int_{\partial^\prime K}\langle u,\nu_K(x)\rangle_+d\mathcal{H}^{n-1}(x)\\
&=C_4\int_{\mathbb{S}^{n-1}}\langle u,v\rangle_+dS(K,v)\\
&>0.
\end{align*}
This implies that $\mu$ is not concentrated on any closed hemisphere. This completes the proof.
\ \ \ \ \ \ \ \ \ \ \ \ \ \ \ \ \ \ \ \ \ \ \ \ \ \ \ \ \ \ \  \ \ \ \ \ \ \ \ \ \  \ \ \ \ \ \ \ \ \ \ \ \ \ \ \ \ \ \ \ \ \ \ \ \ \ \ \ \ \ \ \ \ \ \ \ \ \ \ \ \ \ \ \ \ \ \ \ \ \ \ \ \ \ $\square$

\subsection{Uniqueness of solution}
This section establishes the uniqueness of solutions and completes the proof of Theorem \ref{t1.3}.

\subsection*{Proof of Theorem \ref{t1.3}}
Let $h_1$ and $h_2$ be two smooth solution of equation \eqref{1.4}.  Define
$$M=\max_{u\in\mathbb{S}^{n-1}}\frac{h_1(u)}{h_2(u)}
=\frac{h_1(\hat{u})}{h_2(\hat{u})}.$$
We prove $M \leq 1$ by contradiction. Suppose $M>1$. Then
\begin{align}\label{U2}
h_1(\hat{u})>h_2(\hat{u}).
\end{align}
At the maximizer point $\hat{u}$,
\begin{align}\label{U3}
0=\nabla M=\frac{\nabla h_1}{h_1}-\frac{\nabla h_2}{h_2}.
\end{align}
Moreover,
\begin{align}\label{U4}
0\geq\nabla^2 M=\frac{\nabla^2 h_1}{h_1}-\frac{\nabla^2 h_2}{h_2}.
\end{align}
Since $h_1$ and $h_2$ solve \eqref{1.4}, it follows from \eqref{2.11} that for any $u\in\mathbb{S}^{n-1}$
\begin{align*}
f(u)&=\exp\Big(-\tfrac12\sum_{k=1}^s\langle\nabla h_1+h_1u,e_k\rangle^2\Big)h_1^{1-p}
(|\nabla h_1|^2+h_1^2)^{\frac{q-n}{2}}
\det(\nabla^2h_1+h_1I)\\
&=\exp\Big(-\tfrac12\sum_{k=1}^s\Big\langle h_1\Big(\frac{\nabla h_1}{h_1}+u\Big),e_k\Big\rangle^2\Big)
h_1^{q-p}\Big(\Big|\frac{\nabla h_1}{h_1}\Big|^2+1
\Big)^{\frac{q-n}{2}}\det\Big(\frac{\nabla^2h_1}{h_1}+I\Big)\\
&=\exp\Big(-\tfrac12\sum_{k=1}^s\Big\langle h_2\Big(\frac{\nabla h_2}{h_2}+u\Big),e_k\Big\rangle^2\Big)
h_2^{q-p}\Big(\Big|\frac{\nabla h_2}{h_2}\Big|^2+1
\Big)^{\frac{q-n}{2}}\det\Big(\frac{\nabla^2h_2}{h_2}+I\Big).
\end{align*}
Evaluating at $\hat{u}$ and using \eqref{U3} and \eqref{U4}, we obtain
\begin{align*}
1&=\frac{\exp(-\frac{1}{2}\sum_{k=1}^s\langle h_1(\frac{\nabla h_1}{h_1}+u),e_k\rangle^2)
h_1^{q-p}\det(\frac{\nabla^2h_1}{h_1}+I)}
{\exp(-\frac{1}{2}\sum_{k=1}^s\langle h_2(\frac{\nabla h_2}{h_2}+u),e_k\rangle^2)
h_2^{q-p}\det(\frac{\nabla^2h_2}{h_2}+I)}\\
&\leq\frac{\exp(-\frac{1}{2}\sum_{k=1}^sh_1^2\langle\frac{\nabla h_1}{h_1}+u,e_k\rangle^2)}
{\exp(-\frac{1}{2}\sum_{k=1}^sh_2^2\langle\frac{\nabla h_1}{h_1}+u,e_k\rangle^2)}
\Big(\frac{h_1}{h_2}\Big)^{q-p}\\
&=\Big(\frac{\exp(\frac{h_2}{2})}{\exp(\frac{h_1}{2})}\Big)^
{\sum_{k=1}^sh_1^2\langle\frac{\nabla h_1}{h_1}+u,e_k\rangle^2}
\Big(\frac{h_1}{h_2}\Big)^{q-p}.
\end{align*}
From \eqref{U2} and the assumption $p\geq q$, we have
$$\Big(\frac{\exp(\frac{h_2^2}{2})}{\exp(\frac{h_1^2}{2})}\Big)^
{\sum_{k=1}^sh_1^2\langle\frac{\nabla h_1}{h_1}+u,e_k\rangle^2}\geq
\Big(\frac{h_1}{h_2}\Big)^{p-q}\geq1.$$
Hence
$$\exp\Big(\frac{h_2}{2}\Big)\geq\exp\Big(\frac{h_1}{2}\Big),$$
which yields $h_2(\hat{u})\geq h_1(\hat{u})$. This contradicts \eqref{U2}. Therefore,
\begin{align}\label{U5}
h_1(u)\leq h_2(u)\ \ \text{for all } u\in\mathbb{S}^{n-1}.
\end{align}

Next, define
\begin{align*}
m=\min_{u\in\mathbb{S}^{n-1}}\frac{h_1(u)}{h_2(u)}
=\frac{h_1(\check{u})}{h_2(\check{u})}.
\end{align*}
Similar to the proof above, we have
$$h_1(u)\geq h_2(u)\ \ \text{for all } u\in\mathbb{S}^{n-1}.$$
This, together with \eqref{U5}, yields
$$h_1(u)\equiv h_2(u)\ \ \text{for all } u\in\mathbb{S}^{n-1}.$$
The proof is concluded.\ \ \ \ \ \ \ \ \ \ \ \ \ \ \ \ \ \ \ \ \ \ \ \ \ \ \ \ \ \ \  \ \ \ \ \ \ \ \ \ \  \ \ \ \ \ \ \ \ \ \ \ \ \ \ \ \ \ \ \ \ \ \ \ \ \ \ \ \ \ \ \ \ \ \ \ \ \ \ \ \ \ \ \ $\square$

\subsection*{Acknowledgments}
The authors extend their heartfelt appreciation to the anonymous reviewers for their valuable feedback. This work was supported by Natural Science Foundation of China (Grant No. 11661049 and 12601195).


\vskip 1.0cm
\begin{thebibliography}{11}

{\small

\bibitem{M} H. Minkowski, {\it Volumen und Oberfl\"{a}che}, Math. Ann., {\bf 57} (1903), 447-495.

\bibitem{Lu} E. Lutwak, {\it The Brunn-Minkowski-Firey theory. I. Mixed volumes and the Minkowski problem}, J. Differential Geom., {\bf 38} (1993), 131-150.

\bibitem{HaL} C. Haberl, E. Lutwak, D. Yang and G. Y. Zhang, {\it The even Orlicz Minkowski problem}, Adv. Math., {\bf 224} (2010), 2458-2510.

\bibitem{GaH1} R. J. Gardner, D. Hug, W. Weil, S. D. Xing and D. P. Ye, {\it General volumes in the Orlicz Brunn-Minkowski theory and a related Minkowski problem I}, Calc. Var. Partial Differential Equations, {\bf 58} (2019), 1-35.

\bibitem{GaH2} R. J. Gardner, D. Hug, S. Xing and D. Ye, {\it General volumes in the Orlicz-Brunn-Minkowski theory and a related Minkowski problem II}, Calc. Var. Partial Differential Equations, {\bf 59} (2020), 1-33.

\bibitem{HLYZ} Y. Huang, E. Lutwak, D. Yang and G. Y. Zhang, {\it Geometric measures in the dual Brunn-Minkowski theory and their associated Minkowski problems}, Acta Math., {\bf 216} (2016), 325-388.

\bibitem{LYZ} E. Lutwak, D. Yang and G. Y. Zhang, {\it $L_p$ dual curvature measures}, Adv. Math.,  {\bf 329} (2018), 85-132.

\bibitem{CL} H. Chen and Q.-R. Li, {\it The $L_p$ dual Minkowski problem and related parabolic flows}, J. Funct. Anal., {\bf 281} (2021), 109139.

\bibitem{LL} Y. N. Liu and J. Lu, {\it A flow method for the dual Orlicz-Minkowski problem}, Trans. Amer. Math. Soc., {\bf 373} (2020),  5833-5853.

\bibitem{CF} A. Colesanti and M. Fimiani, {\it The Minkowski problem for the torsional rigidity}, Indiana Univ. Math. J., {\bf 59} (2010), 1013-1039.

\bibitem{ZZ1} X. Zhao and P. B. Zhao, {\it The $p$-th dual Minkowski problem for the $k$-torsional rigidity corresponding to a $k$-Hessian equation}, J. Funct. Anal., {\bf 291} (2026), 111576.

\bibitem{J} D. Jerison, {\it A Minkowski problem for electrostatic capacity}, Acta Math., {\bf 176} (1996), 1-47.

\bibitem{Co} A. Colesanti, K. Nystr\"{o}m, P. Salani, J. Xiao, D. Yang and G. Y. Zhang, {\it The Hadamard variational formula and the Minkowski problem for p-capacity}, Adv. Math., {\bf 285} (2015), 1511-1588.

\bibitem{ZX} D. Zou and G. Xiong, {\it  The $L_p$ Minkowski problem for the electrostatic $\mathfrak{p}$-capacity}, J. Differential Geom., {\bf 116} (2020), 555-596.


\bibitem{CD} B. Chen, W. D. Wang, X. Zhao and P. B. Zhao, {\it An inverse Gauss curvature flow and its application to the $p$-capacitary Orlicz-Minkowski problem}, Rev. Mat. Iberoam., {\bf 41} (2025), 2283-2308.

\bibitem{HXZ} Y. Huang, D. M. Xi and Y. M. Zhao, {\it The Minkowski problem in Gaussian probability space}, Adv. Math., {\bf 385} (2021), 107769.

\bibitem{BL} K. J. K\"{o}r\"{o}czky, E. Lutwak, D. Yang, G. Y. Zhang and Y. M. Zhao, {\it The Gauss image problem}, Commun. Pure Appl. Math., {\bf 73} (2020), 1406-1452.

\bibitem{CS} B. Chen, W. Shi and W. D. Wang, {\it An inverse Gauss curvature flow to the $L_p$-Gauss Minkowski problem}, J. Math. Anal. Appl., {\bf 541} (2025), 128656.

\bibitem{CW} L. Chen, D. Wu and N. Xiang, {\it Smooth solutions to  the Gauss image problem}, Pacific J. Math., {\bf 317} (2022), 275-295.

\bibitem{FHX} Y. B. Feng, S. Hu and L. Xu, {\it On the $L_p$ Gaussian Minkowski problem}, J. Differential Equations, {\bf 363} (2023), 350-390.

\bibitem{FLX} Y. B. Feng, Y. Li and L. Xu, {\it Existence of solutions to the Gaussian dual Minkowski problem}, J. Differential Equations, {\bf 416} (2025), 268-298.

\bibitem{FS} N. Fu and J.-P. Sun, {\it The $L_p$-Gauss dual Minkowski problem}, arXiv: 2412.13557v2 (2025).

\bibitem{LY} X. Li and D. P. Ye, {\it The Gaussian Minkowski problem for epigraphs of convex functions}, Int. Math. Res. Not. IMRN, {\bf 2026} (2026), rnag075.

\bibitem{L} J. Q. Liu, {\it The $L_p$-Gauss Minkowski problem}, Calc. Var. Partial Differential Equations, {\bf 61} (2022), Paper No. 28.

\bibitem{W} H. J. Wang, {\it Continuity of the solution to the $L_p$ Minkowski problem in Gaussian probability space},  Acta Math. Sin. (Engl. Ser.), {\bf 38} (2022), 2253-2264.

\bibitem{S} R. Schneider, {\it Convex Bodies: The Brunn-Minkowski theory}, 2nd edn, Cambridge Univ. Press, Cambridge, 2014.

\bibitem{Fi} W. J. Firey, {\it $p$-means of convex bodies}, Math. Scand., {\bf 10} (1962), 17-24.

\bibitem{G} R. J. Gardner, {\it Geometric Tomography}, second ed., Cambridge Univ. Press, New York, 2006.

\bibitem{TW} N. S. Trudinger and X.-J. Wang, {\it The Monge-Amp\`{e}re equation and its geometric applications}, Handb. Geom. Anal.,  {\bf I} (2008), 467-524.

\bibitem{CCL} H. Chen, S. Chen and Q.-R. Li, {\it Variations of a class of Monge-Amp\`{e}re type functionals and their applications}, Anal. PDE, {\bf 14} (2021), 689-716.

\bibitem{C1} L. Caffarelli, {\it A localization property of viscosity solutions to the Monge-Amp\`{e}re equation and their strict convexity}, Ann. Math., {\bf 131} (1990), 129-134.

\bibitem{C2} L. Caffarelli, {\it Interior $W^{2,p}$-estimates for solutions of the Monge-Amp\`{e}re equation}, Ann. Math., {\bf 131} (1990), 135-150.




}
\end{thebibliography}
\end{document}